\def\kb{{\mathbf k}}
\def\Xk{{\mathcal X}}
\def\Hk{{\mathcal H}}
\def\Fk{{\mathcal F}}
\def\Gk{{\mathcal G}}
\def\Ck{{\mathcal C}}
\def\Pk{{\mathcal P}}
\def\Cf{{\mathfrak C}}
\def\RR{{\mathbb R}}
\def\NN{{\mathbb N}}
\def\lt{{\rm lt}}
\def\cls{{\rm cls}}
\def\beg{{\rm beg}}
\def\endfin{{\rm end}}
\def\sgn{{\rm sgn}}
\def\set{{\rm set}}
\def\ch{{\rm ch}}
\def\pd{{\rm pd}}
\def\md{{\rm md}}
\def\lcm{{\rm lcm}}
\def\gr{{\rm gr}}
\newtheorem{definition}{Definition}[section]
\newtheorem{proposition}{Proposition}[section]
\newtheorem{theorem}{Theorem}[section]
\newtheorem{example}{Example}[section]
\newtheorem{remark}{Remark}[section]
\begin{document}

\title[Cellular structure of the Pommaret-Seiler Resolution]{Cellular structure of the Pommaret-Seiler resolution for quasi-stable ideals}
\author{Rodrigo Iglesias \and Eduardo S\'aenz-de-Cabez\'on}

\maketitle

\begin{abstract}
We prove that the Pommaret-Seiler resolution for quasi-stable ideals is cellular and give a cellular structure for it. This shows that this resolution is a generalization of the well known Eliahou-Kervaire resolution for stable ideals in a deeper sense. We also prove that the Pommaret-Seiler resolution can be reduced to the minimal one via Discrete Morse Theory and provide a constructive algorithm to perform this reduction.
\end{abstract}

\section{Introduction}
Involutive bases lie at the interplay of the algebraic theory of differential equations and Gr\"obner bases theory. The algebraic approach to differential systems of equations by Riquier \cite{R10} and Janet \cite{J24} in the first decades of the twentieth century included, among others, the concept of multiplicative variables and involutive division. In fact, the concept of involution is central to the formal analysis of non-normal systems of differential equations. The geometric formalism of the formal theory of differential systems leads in a natural way to strong connections to commutative algebra. For instance, the notion of the degree of involution of a differential equation is equivalent to the Castelnuovo-Mumford regularity of a certain polynomial module associated to it. Milestone works in this direction include those by Spencer \cite{S69}, Pommaret \cite{P78} and Seiler \cite{S10} among others. The connection to  Gr\"obner bases theory, was established first by Wu \cite{W91} and by Gerdt et al. \cite{GB98a,GB98b,G99}  in whose work involutive bases were introduced as a special kind of Gr\"obner bases with additional combinatorial properties. The main idea behind involutive bases is to assign  a subset of the variables to each generator in a basis  of the ideal or module under consideration. This is called its set of {\em multiplicative variables}. This assignment is called an involutive division because one only allows multiplication of each generator by polynomials in its multiplicative variables. Different rules of assignment of multiplicative variables define different involutive divisions like  the Thomas, Janet or Pommaret divisions, to name the main ones. The multiplication restrictions that define involutive divisions lead to extra combinatorial properties beyond those of Gr\"obner bases. Furthermore, involutive bases have been generalized to non-commutative contexts \cite{E06,S10,CM20}.

In his two papers \cite{S09a,S09b}, Seiler performs an extensive study of involutive bases in the general context of algebras of solvable type, with an emphasis on Pommaret bases. When focusing on structural properties of ideals or modules that can be determined by the use of involutive bases, Pommaret bases are particularly useful. They may be applied for determining the Krull and projective dimensions, and the depth of a polynomial module. Seiler provides for instance brief proofs of Hironaka’s criterion for Cohen-Macaulay modules and
of the graded form of the Auslander-Buchsbaum formula, using Pommaret bases. In syzygy theory, Pommaret bases can be used to construct a free resolution which is generically minimal for componentwise linear modules.

In the case of polynomial ideals, finite Pommaret bases always exist, up to a generic change of coordinates. This is of course not the case for monomial ideals, since under a generic change of coordinates monomial ideals are no longer monomial. This raises the question of characterizing those monomial ideals for which a finite Pommaret basis exists, which are called {\em quasi-stable} ideals. 
For these, the syzygy complex given by the Pommaret basis has a differential algebra structure, and an explicit formula
for the differential is obtained. If the ideal is stable, this resolution, called the {\em Pommaret-Seiler resolution}, is minimal. Several important invariants such as the Castelnuovo-Mumford regularity, projective dimension or extremal Betti numbers can be read directly off the Pommaret basis via this resolution.


Our work is focused on quasi-stable ideals and free resolutions of them, bringing the theory of involutive bases into the combinatorial approach to commutative algebra, in which monomial ideals are one of the central concepts under study. Free resolutions are the main object in the homological approach to ideals and modules in commutative algebra, and the computation of a minimal one is very demanding, even in the case of monomial ideals, being one of the main problems in the area. Two main strategies have been used in the literature to study free resolutions of monomial ideals. One is the study of explicit minimal free resolutions for particular families of monomial ideals, and the other is the development of general procedures to obtain free resolutions that (although possibly not minimal) provide homological information on the ideal.
The main result of the first approach is the explicit description of the minimal free resolution of stable ideals given by Eliahou and Kervaire \cite{EK90}, and the squarefree versions of this given by Herzog and coauthors in \cite{AHH98,GHP02}. The main construction within the second approach is the Taylor resolution \cite{T66}, which is a combinatorial explicit resolution that is usually not minimal. A more compact resolution derived from this was given by Lyubeznik \cite{L88}. Taylor and Lyubeznik resolutions are two instances of {\em cellular resolutions} \cite{BS98}, which form another general strategy for producing monomial resolutions. This takes advantage of the combinatorial nature of monomial ideals to encode their resolutions by means of cellular complexes. As a complement to this line of research, several techniques for minimizing a given resolution have been developed, in particular those based on  Discrete Vector Fields (a subject initiated by R. Forman in \cite{F98}, satellite of his Discrete Morse Theory), see \cite{AFG20} for a recent example. Another general technique for constructing monomial resolutions is the iterated mapping cone \cite{HT02}, which allows the construction of (non-minimal) free resolutions for arbitrary polynomial and monomial ideals. For instance, the already mentioned Taylor and Eliahou-Kervaire resolutions are examples of iterated mapping cones.

The connection of involutive bases and the theory of monomial resolutions was made by Seiler in \cite{S09b}. Pommaret-Seiler resolution is a (non-minimal) explicit free resolution for quasi-stable ideals. It is a direct generalization of the Eliahou and Kervaire resolution for stable ideals and shares several properties with it, like the fact that both can be seen as iterated mapping cones, see \cite{AFSS15}. Another important property of the Eliahou-Kervaire resolution is that it is a cellular resolution \cite{M10}, hence a natural question is whether the Pommaret-Seiler resolution is also cellular. We give an affirmative answer in Theorem \ref{th:cellular} and also give an explicit cellular structure for the Pommaret-Seiler resolution. These results are based on previous work in \cite{M10} on stable ideals and in particular on the results in \cite{DM14} which show the cellular nature of the mapping cone resolution of ideals with linear quotients. Our approach is to extend their work to quasi-stable ideals using Pommaret bases as generating sets. Furthermore, we use the algebraic version of Discrete Morse Theory in order to reduce the Pommaret-Seiler resolution of a quasi-stable ideal to obtain its minimal free resolution. These contributions play a part in the development of the combinatorial aspects of involutive bases and their connection to monomial ideal theory. Some recent works on other aspects of this connection are \cite{C21,BC22}.

The outline of the paper is as follows. In Section \ref{sec:PS-EK} we introduce the Pommaret-Seiler resolution and its relation to the Eliahou-Kervaire one. In Section \ref{sec:cellular} we prove that the Pommaret-Seiler resolution is cellular and give an actual cellular structure for it. Finally, in Section \ref{sec:reduction} we provide a reduction procedure to obtain minimal free resolutions of quasi-stable ideals.

\section{Pommaret-Seiler and Eliahou-Kervaire resolutions} \label{sec:PS-EK}
Involutive divisions (in particular the Pommaret division) can be defined in very general contexts. We however restrict here to the setting of monomial ideals in the commutative polynomial ring on $n$ variables. The definitions that follow are therefore limited to monomial ideals, but more general versions can be found in the literature; we refer the interested reader to \cite{S09a} or \cite{GB98a} for instance.

\subsection{Pommaret bases and quasi-stable ideals}
Let $R=\kb[x_1,\dots,x_n]$ be the polynomial ring on $n$ variables over a field $\kb$ and denote by $\Xk$ the set of variables $\{x_1,\dots,x_n\}$ \footnote{Observe that in the notation of \cite{S09a,S09b}, which we follow here, the ordering of the variables is reversed with respect to the traditional one used for instance in \cite{EK90}.}. Let $\mu=(\mu_1,\dots,\mu_n)\in \mathbb{N}^n$. For a monomial $x^\mu\in R$ 
we say that the {\em class} of $\mu$ or $x^\mu$,
denoted by $\cls(\mu)=\cls(x^\mu)$,
 is equal to $\min\{ i\vert \mu_i\neq 0\}$. We say that the {\em multiplicative variables} of $x^\mu$
are $\Xk_P(x^\mu)=
\{x_1,\dots,x_{\cls (\mu)}\}$, and we denote by $\overline{\Xk}_P(x^\mu)$ the set of {\em non-multiplicative variables} of $x^\mu$, given by $\Xk\setminus \Xk_P(x^\mu)$. We say that $x^\mu$ is an {\em involutive divisor} of $x^\nu$ if $x^\mu\vert x^\nu$ and $x^{\nu-\mu}\in \kb[\Xk_P({x^\mu})]$.

\begin{definition}
Let $\Hk$ be a finite collection of monomials, $\Hk\subseteq R$. We say that $\Hk$ is a {\em Pommaret basis} of the monomial ideal $I=\langle \Hk \rangle$ if $I=\bigoplus_{h\in\Hk} h\cdot\kb[\Xk_P(h)]$.

We call a monomial ideal $I$ {\em quasi-stable}, if it possesses a finite monomial Pommaret basis.

\end{definition}

\begin{definition}
For every $h_i\in \Hk$, the Pommaret basis of a quasi-stable ideal $I$, we define the {\em involutive monomial cone} $\Ck(h_i)$ of $h_i$ with respect to the Pommaret division as the set of all monomials in $h_i\cdot\kb[\Xk_P(h_i)]$. 
\end{definition}

\begin{remark}
A quick glance at the polynomial setting: a finite polynomial set $\Hk$ is a Pommaret basis of the polynomial ideal $I=\langle \Hk\rangle$ for the monomial ordering $\prec$, if all the elements of $\Hk$ possess distinct leading terms and these terms form a Pommaret basis of the leading ideal $\lt_\prec(I)$.
\end{remark}

\begin{example}\label{ex:quasi-stable-ideal}
Consider the monomial ideal $I\subseteq\kb[x_1,x_2]$ given by $I=\langle x_1^2, x_2^3\rangle$. The set of multiplicative variables with respect to the Pommaret division for each of the generators are $\Xk_P(x_1^2)=\{x_1\}$ and $\Xk_P(x_2^3)=\{x_1,x_2\}$. The monomial $x_1^2x_2$ is in $I$ but it is not in the involutive cone of any of its minimal generators, hence the minimal generating set of $I$ is not a Pommaret basis of it. The involutive basis of $I$ with respect to the Pommaret division is given by $\Hk=\{x_1^2,x_1^2x_2,x_1^2x_2^2,x_2^3\}$, therefore $I$ is a quasi-stable ideal. Figure \ref{fig:Pbasis} shows on one side the minimal generating set  of $I$ and their usual (overlapping) multiplicative cone of each of its elements, and on the other side the Pommaret basis of $I$ and the involutive (non overlapping) cone of each of its elements.

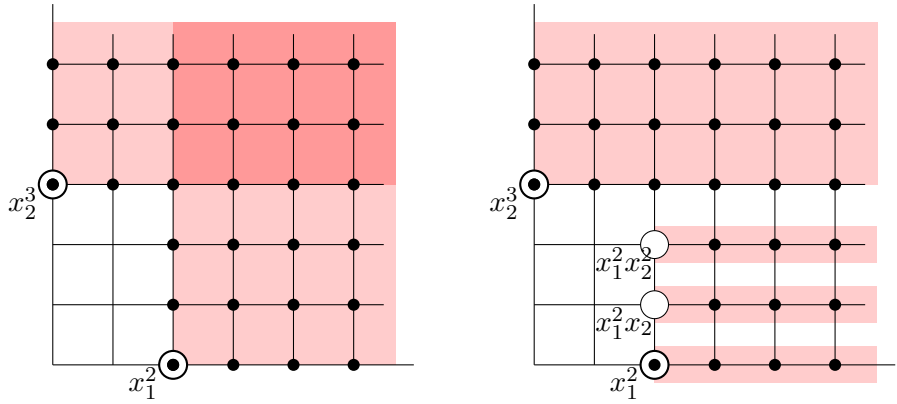
\begin{figure}[t]
\begin{center}
\begin{tikzpicture}[scale=0.8]
\tikzset{dot/.style={draw=black,thick,fill=white,circle}}
\tikzset{wdot/.style={draw=black,fill=white,circle}}

\fill[red!20!white] (0,3) rectangle (5.7,5.7);
\fill[red!20!white] (2,0) rectangle (5.7,5.7);
\fill[red!40!white] (2,3) rectangle (5.7,5.7);


\foreach\l[count=\y] in {$x_2$, $x_2^2$,$x_2^3$, $x_2^4$,$x_2^5$}
{
\draw (0,\y) -- (5.5,\y);
}

\draw(0,0)--(6,0);
\draw(0,0)--(0,6);
\foreach \l[count=\x] in {$x_1$, $x_1^2$,$x_1^3$, $x_1^4$,$x_1^5$}
{
\draw (\x,0) -- (\x,5.5);
}
\node at (1.5,-0.3){$x_1^2$};
\node at (-0.5,2.7){$x_2^3$};

\node[dot] at (0,3){};
\node[dot] at (2,0){};
\fill(0,3) circle (.1cm);
\fill(2,0) circle (.1cm);


\foreach \i in {2,...,5}{
    \foreach \j in {0,...,2}{
    {
     \fill(\i,\j) circle (.1cm);
     }
    }
   }
   
 \foreach \i in {0,...,5}{
    \foreach \j in {3,...,5}{
    {
     \fill(\i,\j) circle (.1cm);
     }
    }
   }

\fill[red!20!white] (8,3) rectangle (13.7,5.7);
\fill[red!20!white] (10,-.3) rectangle (13.7,.3);
\fill[red!20!white] (10,.7) rectangle (13.7,1.3);
\fill[red!20!white] (10,1.7) rectangle (13.7,2.3);

\foreach\l[count=\y] in {$x_2$, $x_2^2$,$x_2^3$, $x_2^4$,$x_2^5$}
{
\draw (8,\y) -- (13.5,\y);
}

\draw(8,0)--(14,0);
\draw(8,0)--(8,6);
\foreach \l[count=\x] in {$x_1$, $x_1^2$,$x_1^3$, $x_1^4$,$x_1^5$}
{
\draw (\x+8,0) -- (\x+8,5.5);
}
\node at (9.5,-0.3){$x_1^2$};
\node at (7.5,2.7){$x_2^3$};

\node[dot] at (8,3){};
\node[dot] at (10,0){};
\fill(8,3) circle (.1cm);
\fill(10,0) circle (.1cm);

\node[wdot] at (10,1){};
\node[wdot] at (10,2){};
\node at (9.5,.7){$x_1^2x_2$};
\node at (9.5,1.7){$x_1^2x_2^2$};

\foreach \i in {11,...,13}{
    \foreach \j in {0,...,2}{
    {
     \fill(\i,\j) circle (.1cm);
     }
    }
   }
   
 \foreach \i in {8,...,13}{
    \foreach \j in {3,...,5}{
    {
     \fill(\i,\j) circle (.1cm);
     }
    }
   }
\end{tikzpicture}
\caption{Minimal generating set (left) and Pommaret basis (right) for $I=\langle x_1^2, x_2^3\rangle$. In the left diagram the usual cones of each of the generators overlap on all their common multiples, while in the right diagram, the involutive cones do not overlap, yielding a disjoint decomposition of the set of monomials in $I$.}\label{fig:Pbasis}
\end{center}
\end{figure}

\end{example}

Quasi-stable monomial ideals can be characterized in several ways which are independent of the theory of Pommaret bases, see \cite[Proposition 5.3.4]{S10}. They have appeared in the literature under the names of {\em ideals of nested type} \cite{BG06}, {\em weakly stable ideals} \cite{CS05} or {\em ideals of Borel-type} \cite{HPV03}. The name quasi-stable is due to the fact that these ideals are a generalization of the important class of {\em stable ideals}. The property of being quasi-stable is preserved by operations like sum, intersection, product and colon, see \cite{C09}.

\begin{definition}
A monomial ideal $I$ is {\em stable} if for every $x^\mu\in I$ it satisfies that for each index $i>\min(x^\mu)$ we have that $x^{\mu}\frac{x_i}{x_{\min(x^\mu)}} \in I$, where $\min(x^\mu)$ denotes the index of the first variable \footnote{This is by definition the {\em class} of $x^\mu$. We use interchangeably $\min(x^\mu)$ and $\cls(x^\mu)$ to denote the same variable, since the first is uniformly used in the general literature on monomial ideals, and the latter is the usual denomination in the Pommaret bases jargon. We believe that this generates no confusion.} that divides $x^\mu$.
\end{definition}

\remark{Quasi-stable ideals can also be characterized in a combinatorial way, see \cite{HPV03,C09}. A monomial ideal is quasi-stable if and only if for any monomial $x^\mu\in I$ and for any $1\leq i < j \leq n$ such that $x_i$ divides $x^\mu$, there exists an integer $t>0$ such that $x_j^t x^\mu / x_i^{\mu_i}\in I$. }

\begin{proposition}[cf. \cite{S10}, Proposition 5.5.6]
A monomial ideal $I$ is stable if and only if its minimal monomial generating set is also a Pommaret basis for $I$.
\end{proposition}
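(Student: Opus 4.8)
The plan is to rephrase the condition ``$G(I)$ is a Pommaret basis'' as a Stanley‑type decomposition of the monomials of $I$, and to recognise that decomposition as the Eliahou--Kervaire decomposition of a stable ideal. Write $G(I)$ for the minimal monomial generating set of $I$. Then $I=\bigoplus_{g\in G(I)}g\cdot\kb[\Xk_P(g)]$ holds if and only if every monomial $u\in I$ can be written $u=g\cdot m$ with $g\in G(I)$ and $m\in\kb[\Xk_P(g)]$, i.e.\ the involutive cones of the minimal generators \emph{cover} $I$; indeed, for \emph{any} minimal generating set these cones are automatically pairwise disjoint, since $g\,m=g'\,m'$ with $m\in\kb[\Xk_P(g)]$ and $m'\in\kb[\Xk_P(g')]$ forces, by comparing exponents, one of $g,g'$ to divide the other, hence $g=g'$ by minimality of $G(I)$. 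So it suffices to prove that this covering property is equivalent to stability.

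\emph{Stability $\Rightarrow$ Pommaret basis.} I would prove the covering property by induction on $\deg u$. A monomial of $I$ of minimal degree necessarily lies in $G(I)$, where $u=u\cdot 1$. If $u\in I\setminus G(I)$, then $u$ is a proper multiple of some $g_0\in G(I)$, so $u/x_k\in I$ for every variable $x_k$ dividing $u/g_0$ and the set $S=\{k:\ x_k\mid u\text{ and }u/x_k\in I\}$ is nonempty. Put $\ell=\min S$ and $w=u/x_\ell\in I$; by induction $w=g\cdot m'$ with $g\in G(I)$ and $m'\in\kb[\Xk_P(g)]$. The crux is to show $\ell\le\cls(g)$. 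If instead $\ell>\cls(g)$, then $x_\ell$ is non-multiplicative for $g$, so stability applied to $g\in I$ yields $g\,x_\ell/x_{\cls(g)}\in I$, whence $u/x_{\cls(g)}=(g\,x_\ell/x_{\cls(g)})\,m'\in I$; as $\cls(g)<\ell$, this contradicts $\ell=\min S$. Hence $m'\,x_\ell\in\kb[\Xk_P(g)]$ and $u=g\cdot(m'\,x_\ell)$ lies in the involutive cone of $g$, closing the induction.

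\emph{Pommaret basis $\Rightarrow$ stability.} Let $u\in I$ and $i>c:=\cls(u)$; we must show $u\,x_i/x_c\in I$. Since $G(I)$ is a Pommaret basis, write $u\,x_i=h\cdot p$ with $h\in G(I)$ and $p\in\kb[\Xk_P(h)]$; I claim $x_c\mid p$. If not, then the exponent of $x_c$ in $u\,x_i$, which is positive because $c=\cls(u)$ and $i>c$, must already be positive in $h$, so $\cls(h)\le c$ and every variable of $p$ has index at most $\cls(h)\le c$. For $p\ne 1$ this gives $\cls(p)\le\cls(h)\le c$, whence $\cls(p)=\cls(h\,p)=\cls(u\,x_i)=c$, i.e.\ $x_c\mid p$, contradicting the assumption; and for $p=1$ we get $u\,x_i=h$, so the minimal generator $h$ would be a proper multiple of the element $u\in I$, which is impossible. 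Therefore $x_c\mid p$ and $u\,x_i/x_c=h\cdot(p/x_c)$ lies in the involutive cone of $h$, hence in $I$.

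The genuinely delicate point is the inductive step of the first implication: one must peel off from $u$ exactly the variable that lets the stability hypothesis act on the generator returned by the induction, and $\ell=\min S$ is the choice that works; this is the substance of the Eliahou--Kervaire decomposition. The rest is routine bookkeeping with divisibility, supports and class indices, and the converse implication is essentially immediate once one uses that a minimal generator cannot be a proper multiple of an element of $I$.
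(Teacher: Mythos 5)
Your argument is correct. Note that the paper itself gives no proof of this proposition --- it only cites it from Seiler's book (Proposition 5.5.6 of \cite{S10}) --- so there is no in-text argument to compare against; what you have produced is a correct, self-contained, elementary replacement. The two key observations both check out: (i) the involutive cones of the elements of a \emph{minimal} generating set are automatically pairwise disjoint, because $gm=g'm'$ with $m\in\kb[\Xk_P(g)]$, $m'\in\kb[\Xk_P(g')]$ and, say, $\cls(g)\leq\cls(g')$ forces $g'\mid g$ (the exponents of $g$ and $g'$ agree above $\cls(g')$, dominate in between, and $g'$ vanishes below $\cls(g')$), so the Pommaret-basis condition reduces to the covering property; and (ii) in the inductive step of the forward direction, choosing $\ell=\min S$ is exactly what makes the stability hypothesis applicable to the generator $g$ returned by induction, since $\ell>\cls(g)$ would let $g x_\ell/x_{\cls(g)}\in I$ place $\cls(g)<\ell$ into $S$, a contradiction. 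The converse direction is also sound, including the careful treatment of the degenerate case $p=1$. Your proof is essentially the standard one (it implicitly reconstructs the Eliahou--Kervaire decomposition $x^\mu=\beg(x^\mu)\endfin(x^\mu)$ of Proposition \ref{prop:stable-partition}), and it has the merit of making explicit why disjointness comes for free from minimality, a point the paper's definition of Pommaret basis (as a direct sum) might otherwise obscure.
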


\subsection{The Eliahou-Kervaire resolution for stable ideals}

The main object under study in this paper is the Pommaret-Seiler resolution, which is a non-minimal resolution for quasi-stable ideals that can be immediately read off from their Pommaret bases. It is a direct generalization of the Eliahou-Kervaire resolution \cite{EK90}, that gives an explicit closed form of the minimal free resolution of stable ideals. For the sake of completeness we include here the description of the Eliahou-Kervaire resolution, so that it is evident how the structure of the Pommaret-Seiler one is based on it. In both cases there is an important role played by a correspondence between elements of the generating set considered to build the resolution (the minimal generating set in the case of the Eliahou-Kervaire resolution, the Pommaret bases in the case of the Pommaret -Seiler resolution) and the set of monomials in the ideal. This correspondence gives rise to a partition of the set of monomials in the ideal, that plays an important role in the proof that these resolutions have a cellular structure, as we will see in Section \ref{sec:cellular}.

In order to describe the Eliahou-Kervaire resolution, we need the following result and definition (we follow the notations in \cite{M10}).
\begin{proposition}\label{prop:stable-partition}
Let $I$ be a stable monomial ideal and $x^\mu\in I$. Then there exists a unique element $g$ of the minimal monomial generating set of $I$, and a monomial $h$ such that $x^\mu=gh$ and for every $x_i$ dividing $h$ we have that $i\leq\min(g)$.
\end{proposition}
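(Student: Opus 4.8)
The plan is to prove existence and uniqueness separately, noting first that the statement is in essence a reformulation of the proposition just stated together with the definition of a Pommaret basis: since $I$ is stable, its minimal monomial generating set $G(I)$ is a Pommaret basis, i.e. $I=\bigoplus_{g\in G(I)} g\cdot\kb[x_1,\dots,x_{\min(g)}]$, and a direct sum of monomial subspaces is precisely a partition of the monomials of $I$ into the pieces $g\cdot\kb[\Xk_P(g)]$ — which is exactly existence plus uniqueness. I would nonetheless spell out a self-contained argument.

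\emph{Existence.} Given $x^\mu\in I$, the set of $g\in G(I)$ with $g\mid x^\mu$ is nonempty; among these I would choose $g$ so that the pair $\bigl(\min(g),\,-\deg_{x_{\min(g)}}(g)\bigr)$ is lexicographically largest — that is, first maximize $j:=\min(g)$, then minimize $b:=\deg_{x_j}(g)$ (note $b\ge 1$). Writing $h:=x^\mu/g$, I claim $h\in\kb[x_1,\dots,x_j]$. If not, some $x_i$ with $i>j$ divides $h$, so $x_ig\mid x^\mu$; applying stability to $g$ and the index $i>\min(g)=j$ yields $g':=x_ig/x_j\in I$, and $g'\mid x^\mu$ because the only exponents that change are that of $x_j$ (which drops) and that of $x_i$ (which rises by $1$ and stays $\le\mu_i$ since $x_i\mid h$). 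Pick $\tilde g\in G(I)$ with $\tilde g\mid g'$; then $\tilde g\mid x^\mu$. All variables of $g'$ have index $\ge j$ and its $x_j$-exponent equals $b-1$, so $\min(\tilde g)\ge j$: if $\min(\tilde g)>j$ this contradicts the maximality of $j$, and if $\min(\tilde g)=j$ then $1\le\deg_{x_j}(\tilde g)\le b-1<b$ contradicts the minimality of $b$. Hence $h\in\kb[x_1,\dots,x_j]$ and $x^\mu=gh$ is the desired decomposition.

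\emph{Uniqueness.} The key observation is that whenever $x^\mu=gh$ with $h\in\kb[x_1,\dots,x_{\min(g)}]$ and $j:=\min(g)$, one has $\deg_{x_i}(g)=\mu_i$ for every $i>j$; hence $g=x_j^{\,b}\prod_{i>j}x_i^{\mu_i}$ with $b:=\deg_{x_j}(g)\ge 1$, so $g$ is determined by the pair $(j,b)$ alone. Given two such decompositions with generators $g_1,g_2$, assume $\min(g_1)=j_1\le j_2=\min(g_2)$ and compare $x_i$-exponents index by index: they agree for $i>j_2$, $g_2$ has no variable of index $<j_2$ while $g_1$ does when $j_1<j_2$, and when $j_1=j_2$ divisibility among minimal generators forces the $x_{j_1}$-exponents to coincide; in all cases one gets $g_2\mid g_1$. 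Since both lie in the minimal generating set this forces $g_1=g_2$, and then $h_1=h_2$.

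I expect the existence part to be the only genuine obstacle, and within it the choice of extremal generator: a single criterion such as ``$\min(g)$ maximal'' does not close the argument, and the tie-breaking condition minimizing $\deg_{x_{\min(g)}}(g)$ is exactly what is needed to derive the contradiction from $\tilde g$. Once the shape $g=x_j^{\,b}\prod_{i>j}x_i^{\mu_i}$ of an admissible generator has been identified, uniqueness is a purely combinatorial exponent comparison.
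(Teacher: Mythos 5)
Your proof is correct. Note that the paper itself states this proposition without proof: it is the classical Eliahou--Kervaire decomposition, recalled from \cite{EK90} and \cite{M10}, so there is no in-paper argument to compare against. Your opening observation is exactly the ``intended'' derivation in the context of this paper: since $I$ is stable its minimal generating set is a Pommaret basis (the proposition quoted from \cite{S10}), and the defining condition $I=\bigoplus_{g} g\cdot\kb[\Xk_P(g)]$ with $\Xk_P(g)=\{x_1,\dots,x_{\min(g)}\}$ is verbatim the asserted existence and uniqueness of the factorization $x^\mu=gh$ with $h\in\kb[x_1,\dots,x_{\min(g)}]$. Your self-contained argument is also sound and is essentially the original Eliahou--Kervaire exchange argument transcribed into the reversed variable ordering used here: the extremal choice of $g$ (maximize $\min(g)$, then minimize $\deg_{x_{\min(g)}}(g)$) together with the stability move $g\mapsto x_ig/x_{\min(g)}$ closes existence, and your identification of the forced shape $g=x_j^{\,b}\prod_{i>j}x_i^{\mu_i}$ reduces uniqueness to the fact that no minimal generator divides another. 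Your remark that maximizing $\min(g)$ alone does not suffice is well taken, though one could alternatively iterate the replacement $g\leftarrow\tilde g$ and terminate by the strict drop in $\deg_{x_j}$; the tie-breaking criterion packages that induction into a single extremal choice. The only thing I would add is an explicit pointer to the convention reversal (the paper's $\min$ plays the role of the traditional $\max$), since that is the one place a reader comparing with \cite{EK90} could stumble.
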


We say that $g$ is the {\em beginning} of $x^\mu$ and that $h$ is the {\em end} of $x^\mu$. We denote them by $\beg(x^\mu)$ and $\endfin(x^\mu)$ respectively.

\begin{definition}
Let $I$ be a monomial ideal. An $EK$-symbol for $I$ is a pair of the form $[f,u]$ where $f$ is a minimal generator of $I$ and $u$ is a square-free monomial satisfying $\min(u)\gneq \min(f)$.
\end{definition}

The Eliahou-Kervaire resolution is of the form 
\[
0\longrightarrow \cdots \longrightarrow E_l\longrightarrow E_{l-1}\longrightarrow\cdots E_0\longrightarrow I\longrightarrow 0,
\]
where each of the modules $E_j$ is a free module generated by the set of $EK$-symbols $[f,u]$ such that $\deg(u)=j$. The differential of the resolution is given by
\begin{equation}
d([f,u])=\sum_{x_i\vert u} \sgn(x_i,u)x_i[f,\frac{u}{x_i}]-\sum_{x_i\vert u}\sgn(x_i,u)\endfin(x_if)[\beg(x_if),\frac{u}{x_i}],
\end{equation}
where $\sgn(x_i,u)=1$ if the cardinality of the set $\{x_j \text{ s.t.  } x_j \mbox{ divides } u \mbox{ and } j\geq i\}$ is odd, and $-1$ otherwise \cite{EK90}. This complex is a minimal free resolution for stable ideals.

An important fact about the Eliahou-Kervaire resolution is that it can be obtained as an iterated mapping cone \cite{CE95,HT02}, in fact this is a way to prove its minimality, as seen in \cite{PS08}.

\subsection{The Pommaret-Seiler resolution}
For any quasi-stable ideal $I$ with Pommaret basis $\Hk=\{h_1,\dots,h_r\}$, Seiler gave in \cite[Section 7]{S09b} an explicit description of a graded free resolution of $I$ that can be directly read off from $\Hk$. In order to describe it we need the following notations.

Due to the properties of involutive divisions, we have that for any monomial $h_\alpha$ in $\Hk$ and any non-multiplicative variable $x_k\in\overline{\Xk}_P(h_\alpha)$ there exists a unique index $\Delta(\alpha,k)$ and a unique monomial $t_{\alpha;k}\in \kb[\Xk_P(h_{\Delta(\alpha,k)})]$ such that $x_k h_\alpha=t_{\alpha;k}h_{\Delta(\alpha,k)}$. Observe that this is closely related to the property described in Proposition \ref{prop:stable-partition} for stable ideals. The monomial $h_{\Delta(\alpha,k)}$ extends to quasi-stable ideals the role of $\beg(x_kh_\alpha)$ and $t_{\alpha;k}$ the role of $\endfin(x_kh_\alpha)$.

Let now denote by $\beta_0^{(k)}$ the number of generators in $\Hk$ of class $k$,  and by $d=\min\{k\vert\beta_0^{(k)}>0\}$ the minimal class of any generator in $\mathcal{H}$. Then the Pommaret-Seiler resolution has the form
\[
0\longrightarrow R^{r_{n-d}}\longrightarrow \cdots \longrightarrow R^{r_1}\longrightarrow\cdots R^{r_0}\longrightarrow I\longrightarrow 0,
\]
where the ranks of the free modules in the resolution are given by 
\[
r_i=\sum_{k=1}^{n-i}{{n-k}\choose{i}}\beta_0^{(k)}.
\]

The generators of the $i$-th free module in the Pommaret-Seiler resolution are given by pairs of the form $[h_\alpha,u]$ where $h_\alpha\in\Hk$ and $u$ is a degree $i$ square-free monomial satisfying $\min(u)\gneq \cls(h_\alpha)$. The differential in this resolution is given by the following formula (cf. \cite{S09b}):

\begin{equation}\label{eq:PSdifferential}
\partial([h_\alpha,u])=\sum_{j=1}^i(-1)^{i-j} \left( {u_j}[h_\alpha,\frac{u}{{u_j}}]-t_{\alpha;u_j}[h_{\Delta(\alpha,u_j)},\frac{u}{{u_j}}]\right),
\end{equation}

where by abuse of notation, $u_j$ denotes $t_{\alpha;u_j}$ and $\Delta(\alpha,u_j)$ the index of the corresponding variable (i.e. the $j$-th variable dividing $u$).

This resolution will be denoted by $(\mathbb{P},\partial)$. It is not minimal in general. In fact, it is minimal if and only if $I$ is a stable ideal, in which case it coincides with the Eliahou-Kervaire resolution (see \cite{S09b}, Theorem 8.6). Despite its non-minimality, one can read off most of the fundamental homological invariants from the Pommaret-Seiler resolution, like the projective dimension and Castelnuovo-Mumford regularity of $I$.

\begin{remark}
Observe that the presentation of the differentials in $(\mathbb{P},\partial)$ is slightly different from that in \cite[Theorem 7.2]{S09b}, since we use the degree $i$ squarefree monomials $u$ to stress the relation with E-K symbols, while \cite{S09b} uses a presentation of the generators based on the exterior algebra over a free $R$-module of rank $n$, giving in fact a bi-grading for $(\mathbb{P},\partial)$. Both presentations are equivalent, each of them highlighting a different aspect of the Pommaret-Seiler resolution.
\end{remark}

\begin{example}
Consider the ideal $I=\langle x_1^2, x_2^3\rangle$ from Example \ref{ex:quasi-stable-ideal}. It is not a stable ideal, since for instance $x_1^2\in I$ but  $x_1x_2\notin I$. The complex generated by the Eliahou-Kervaire symbols is therefore not a resolution of $I$. Consider instead the Pommaret-Seiler resolution of $I$. The Pommaret basis of this ideal is given by $\Hk=\{x_1^2,x_1^2x_2,x_1^2x_2^2,x_2^3\}$. The generators of the Pommaret-Seiler resolution of $I$ are indexed by $[x_1^2],[x_1^2x_2],[x_1^2x_2^2],[x_2^3]$ for the $0$-th module and by $[x_1^2,x_2],[x_1^2x_2,x_2],$ and $[x_1^2x_2^2,x_2]$, for the $1$-st module. Using formula (\ref{eq:PSdifferential}) for the differential, we obtain the following resolution, sorting the generators lexicografically:
\[
0\longrightarrow R^3 \stackrel{\partial_1}{\longrightarrow}R^4\stackrel{\partial_0}{\longrightarrow}I\longrightarrow 0,
\]
where

\[
\partial_0=
\begin{pmatrix}
    x_1^2  &  x_1^2x_2 & x_1^2x_2^2 & x_2^3   
\end{pmatrix}
,\;\;
 \partial_1=
\begin{pmatrix}
    -x_2  & 0 & 0       \\
    1  &  -x_2 &  0       \\
    0  &  1 & -x_2       \\
    0  &  0 & x_1^2       \\
\end{pmatrix} 
.
\]
It is, as expected, a non-minimal resolution for $I$.
\end{example}

\section{The Pommaret-Seiler resolution is cellular}\label{sec:cellular}

As with the Eliahou-Kervaire resolution, the Pommaret-Seiler resolution of a quasi-stable ideal $I$ can also be obtained as an iterated mapping cone for an adequate sorting of the generators of the Pommaret basis $\Hk$ of $I$ \cite{AFSS15}. This sorting is known as a $P$-ordering, and sorts the elements of $\Hk$ first by class and within each class lexicographically. Therefore, this resolution inherits the mapping cone property from the Eliahou-Kervaire resolution, which gives a stronger meaning to the statement that the Pommaret-Seiler resolution generalizes the Eliahou-Kervaire's one. In this section we prove that this generalization also applies to the cellular character of the resolution.

Cellular resolutions are based on the construction of geometric objects that encode the combinatorics of the generators of monomial ideals and of their least common multiples. The concept originated in \cite{BPS98} and was extended in \cite{BS98,MS04,JW09}. The geometric objects that support cellular resolutions can be simplicial complexes, polyhedral complexes or in general, CW-complexes. In this section we follow the definitions in \cite{BW02}.

Let $X$ be a CW-complex, such that $X^{(i)}$ is its set of $i$-cells and $X^*:=\bigcup_{i\leq0}X^{(i)}$ is the set of cells of $X$. We consider $X^*$ ordered by $c'\leq c$ iff $c'$ is contained in the closure of $c$, for $c,c'\in X^*$. We say that $X$ is $\NN^n$ graded if there is a mapping, denoted by $\gr$, from $X$ to $(\NN^n,\leq)$ compatible with the described ordering in $X$.
\begin{definition}
An $\NN^n$-graded resolution $\Pk$ of the $R$-module $M$ is {\em cellular} if there is an $\NN^n$-graded CW-complex $(X,\gr)$ such that
\begin{enumerate}[label=\alph*)]
\item There is a basis $e_c$ of $\Pk_i^\alpha$ indexed by the $i$-cells $c$ in $X^{(i)}$ such that $gr(c)=\alpha$.
\item {For $c\in X^{(i)}$ we have
\[
\partial_ie_c=\sum_{c\geq c'\in X^{(i-1)}}[c:c']x^{gr(c)-gr(c')} e_{c'},\\;\; i\geq 1,
\]}
\end{enumerate}

\end{definition}
where $[c:c']$ is the coefficient of $c'$ in the differential $\delta(c)$ of the cellular homology of $X$. For $c\in X^{(0)}$ we have that $\partial_oe_c$ has multidegree $\gr(c)$.


A prominent example of CW-complexes used for cellular resolutions of monomial ideals are polyhedral cell complexes, in particular geometrical realizations in $\RR^m$ of $m$-dimensional simplicial complexes. As with simplicial complexes, any polyhedral cell complex $X$ comes equipped with a reduced chain complex, in which the signs for the boundary differentials are specified by imposing an orientation on the faces of $X$, i.e. given a face $F$ in $X$ the boundary differential is the signed sum of its facets:
\[
\partial(F)=\sum_{facets\, G\subset F} sign(G,F)G,
\]
where the signs are induced by the chosen orientation in $X$.
We now label each vertex $v_1,\dots, v_r$ of $X$ with a monomial $x^{\mu_1},\dots, x^{\mu_r}$ in $R=\kb[x_1,\dots,x_n]$. In such a labeled complex, the label of an arbitrary face $F$ of $X$ is the monomial $x^{\mu_F}$ given by the least common
multiple $\lcm(x^{\mu_i} \vert v_i\in F)$ of the monomial labels of the vertices in $F$. 


\begin{example}
Let $I=\langle x^{\mu_1},\dots,x^{\mu_r}\rangle$ be a monomial ideal. Let $X$ be the complete $(r-1)$-dimensional simplex. The cellular resolution supported on $X$ and labeled by the generators of $I$ is a (possibly non-minimal) resolution of $I$. This is the cellular expression of the well known Taylor resolution \cite{T66}.

For the ideal $I=\langle x_1^2,x_2^3\rangle$ of Example \ref{ex:quasi-stable-ideal}, the simplex in two vertices supports the Taylor resolution of $I$ which is minimal in this case. The resolution has the form

\[
0\longrightarrow R\stackrel{\partial_1}{\longrightarrow}R^2\stackrel{\partial_0}{\longrightarrow}I\longrightarrow 0,
\]

where

\[
\partial_0=
\begin{pmatrix}
    x_1^2  &   x_2^3   
\end{pmatrix}
,\;\;
 \partial_1=
\begin{pmatrix}
    -x_2^3 \\
    x_1^2   \\
\end{pmatrix} 
.
\]
\end{example}

Mermin proves in \cite{M10} that the Eliahou-Kervaire resolution is cellular and gives an explicit cellular structure for it. Another cellular structure for the Eliahou-Kervaire resolution can be found in \cite{BW02}. In their paper \cite{DM14}, Dochtermann and Mohammadi give a sufficient condition (possession of a regular decomposition function) for an iterated mapping cone resolution to be cellular and prove that the Eliahou-Kervaire resolution satisfies this condition. We slightly generalize here the argument in \cite{DM14} by defining a regular decomposition function for the Pommaret-Seiler resolution and thus proving the main result of this section.

We first recall the mapping cone construction: Let $I$ be a monomial ideal and $G(I)=\{ m_1,\dots,m_r\}$ a generating set. Then for $j=1,\dots,r$ there are exact sequences:
\[0 \rightarrow R/\left( I_{j-1} :\langle m_j \rangle \right) \xrightarrow[]{\psi} R / I_{j-1} \rightarrow R/I_j \rightarrow 0,\]
where $I_j=\langle m_1,\dots,m_j\rangle$ for each $j=1,\dots,r$..
Thus, assuming that free resolutions $\Fk$ of $R/I_{j-1}$ and $\Gk$ of $ R/\left( I_{j-1} :\langle m_j \rangle \right)$ are already known, one obtains the free resolution of $R/I_j$ as the mapping cone of the chain complex homomorphism $\Gk\rightarrow \Fk$ lifting the map $\psi$. This construction yields an iterative procedure to compute a resolution of $R/I$ provided that for each $j$, we already know a resolution of $ R/\left( I_{j-1} :\langle h_j \rangle \right)$ as well as the liftings of the corresponding maps $\psi$ (which are the hard to obtain pieces in this procedure, see \cite{HT02}).

We say that a monomial ideal $I$ has {\em linear quotients} with respect to a (non-necessarily minimal) generating set $G(I)$ if there exists an ordering of $G(I)=\{ m_1,\dots,m_r\}$, such that for each $j\leq r$ the colon ideal $I_{j-1}:\langle m_j\rangle$ is generated by a subset of the variables. In such case, we denote by $\set(m_j)$ the set of variables that generate $I_{j-1}:\langle m_j\rangle$. The iterated mapping cone of a monomial ideal that has linear quotients with respect to its minimal generating set is a minimal free resolution of $R/I$, cf. \cite{HT02}, Lemma 1.5 and \cite{DM14}, Lemma 2.3. 

For any monomial ideal, let $M(I)$ be the set of monomials in $I$ and $G(I)$ a set of monomial generators of $I$. A {\em decomposition function} for an ideal $I$ with linear quotients with respect to $G(I)$ is an assignment $b: M(I)\rightarrow G(I)$ that for each monomial of $M(I)$ selects a unique generator in $G(I)$. We say that the decomposition function $b$ is {\em regular} if for each $m\in G(I)$ and every $x_t\in\set(m)$ we have that $\set(b(x_tm))\subseteq \set(m)$. Theorem 2.7 in \cite{DM14} (see also Theorem 1.12 in \cite{HT02}) gives a closed form formula for the differentials in the minimal free resolution (obtained as an iterated mapping cone) of any monomial ideal that has linear quotients with respect to an ordering of its minimal generating set and for which we can define a regular decomposition function. Furthermore, for any such ideal, both \cite{DM14} and \cite{G15} independently showed that the iterated mapping cone resolution is cellular by explicitly constructing a regular cell complex supporting it:

\begin{theorem}[\cite{DM14}, Theorem 3.11]
Suppose $I$ has linear quotients with respect to some ordering $(m_1,\dots,m_r)$ of the minimal generators, and furthermore suppose that $I$ has a regular decomposition function. Then the minimal resolution of $I$ obtained as an iterated mapping cone is cellular and supported on a regular $CW$-complex.
\end{theorem}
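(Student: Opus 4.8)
The plan is to read off from the linear quotients of $I$ together with the regular decomposition function $b$ a purely combinatorial poset $P$, to recognize $P$ as the face poset of a regular CW-complex $X$ by means of Bj\"orner's characterization of such posets, and finally to label $X$ so that its cellular chain complex becomes the iterated mapping cone resolution, whose differential is given by the closed form recalled above (\cite{DM14}, Theorem 2.7).

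First I would take as the elements of $P$ the \emph{symbols} $[m_j,\sigma]$ with $1\le j\le r$ and $\sigma\subseteq\set(m_j)$, together with a formal least element $\hat 0$; the symbol $[m_j,\sigma]$ is to become a cell of dimension $|\sigma|$, so the atoms $[m_j,\emptyset]$ will be the vertices of $X$. The covering relations are dictated by the mapping cone differential: $[m_j,\sigma]$ covers, for each $t\in\sigma$, both $[m_j,\sigma\setminus\{t\}]$ and $[\,b(x_t m_j),\,\sigma\setminus\{t\}\,]$. Here the regularity of $b$ enters twice. It guarantees that $\set(b(x_t m_j))$ contains $\sigma\setminus\{t\}$, so that the second symbol above is again a legitimate element of $P$, and --- more importantly --- it forces the two peeling operations in distinct directions $s,t\in\sigma$ to commute, $b\bigl(x_s\, b(x_t m_j)\bigr)=b\bigl(x_t\, b(x_s m_j)\bigr)$, which is the analogue for $b$ of the commutativity behind the mapping cone formula (\cite{HT02}, Lemma 1.5).

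The technical heart of the argument is the description of the lower intervals of $P$. Using the commutativity just mentioned, one shows that $[\hat 0,\,[m_j,\sigma]]$ is isomorphic to the augmented face lattice of a combinatorial $|\sigma|$-dimensional cube: a face of that cube is recorded by a subset $\tau\subseteq\sigma$ of ``free'' directions together with, for each $t\in\sigma\setminus\tau$, a binary choice between keeping $m_j$ and applying $b$ in direction $t$, and it corresponds to the symbol $[m',\tau]$ where $m'$ is obtained from $m_j$ by applying $b$ in each chosen direction --- a generator that is well defined, that is, independent of the order of the choices, precisely because of the commutativity. Consequently, for $x=[m_j,\sigma]$ the open interval $(\hat 0,x)$ has order complex homeomorphic to the boundary of the $|\sigma|$-cube, that is, to the sphere $S^{|\sigma|-1}$, and in particular $P$ is graded. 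Since $P$ also has a least element and is not reduced to it, Bj\"orner's characterization of the face posets of regular CW-complexes produces a regular CW-complex $X$ whose face poset is $P$; concretely, $X$ is assembled by gluing one combinatorial $|\sigma|$-cube for each symbol $[m_j,\sigma]$ along the facet identifications encoded by $b$.

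Finally I would label each vertex $[m_j,\emptyset]$ of $X$ by the monomial $m_j$. Since distinct minimal generators of $I$ do not divide one another, $b(x_t m_j)$ must have $x_t$-exponent exactly one more than $m_j$, hence $\lcm(m_j,b(x_t m_j))=x_t m_j$; an induction on $|\sigma|$ using the cube structure then shows that the $\lcm$ of the vertex labels of the cell $[m_j,\sigma]$ equals $m_j\prod_{t\in\sigma}x_t$, so the $\NN^n$-grading of $X$ coincides with the multidegrees of the corresponding mapping cone generators. It remains to identify the cellular chain complex of the labelled $X$ with the iterated mapping cone: the incidence numbers $[c:c']$ of the cellular homology of $X$ are, up to orientation, the signs attached to the covering relations of cube face lattices, which match the Koszul-type signs in the mapping cone differential, and the monomial coefficients $x^{\gr(c)-\gr(c')}$ reproduce the coefficients of that differential (namely $x_t$ for a facet of the first kind and $x_t m_j/b(x_t m_j)$ for one of the second kind). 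Hence the labelled cellular resolution supported on $X$ is precisely the iterated mapping cone, which is the minimal free resolution of $I$ by the linear quotients hypothesis, proving the theorem. I expect the main obstacle to be the interval lemma --- extracting from the single regularity condition both the legitimacy of the ``moved'' symbols and the commutativity of peeling; once the cube structure of the intervals is in hand, the grading compatibility and the sign bookkeeping are routine.
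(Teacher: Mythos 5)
Your overall architecture (turn the symbols $[m_j,\sigma]$ into a poset, recognize it as the face poset of a regular CW-complex, then match gradings and incidence numbers with the Herzog--Takayama differential) is a legitimate way to organize such a proof, and you correctly identify the commutativity $b(x_s\,b(x_t m))=b(x_t\,b(x_s m))$ as a consequence of regularity (this is Lemma 1.13 of \cite{HT02}). However, the technical heart of your argument --- the claim that the lower interval below $[m_j,\sigma]$ is the augmented face lattice of a $|\sigma|$-cube --- is false, and two of your supporting assertions fail. First, regularity gives $\set(b(x_tm_j))\subseteq\set(m_j)$, which is the \emph{opposite} of the containment $\sigma\setminus\{t\}\subseteq\set(b(x_tm_j))$ you need for the ``moved'' symbol $[b(x_tm_j),\sigma\setminus\{t\}]$ to be legitimate; in \cite{HT02} and \cite{DM14} such terms are declared zero and the corresponding faces degenerate. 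Second, the $2^{|\sigma|}$ iterated images of $m_j$ under $b$ need not be distinct. A counterexample inside the theorem's exact hypotheses: take the stable ideal $I=\langle x_1x_2,x_1x_3,x_2^2,x_2x_3,x_3^2\rangle$ with the Eliahou--Kervaire decomposition function $b=\beg$, $m=x_1x_2$, $\sigma=\{x_2,x_3\}$. Then $b(x_2m)=x_2^2$, $b(x_3m)=x_2x_3$, and both second-step images equal $x_2x_3$, so the cell is a triangle, not a square; moreover $x_2\notin\set(x_2x_3)$, so the would-be fourth edge $[x_2x_3,\{x_2\}]$ is not a symbol at all. The same collapse occurs in this paper's running example (for $h=x^2y^3z$, $\tau=\{y,z\}$ the $2$-cell is the triangle on $x^2y^3z,\,y^4z,\,y^2z^2$ visible in Figure 2), which is the quasi-stable setting in which the theorem is actually deployed here.

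Because the intervals are not cube lattices, your appeal to Bj\"orner's criterion collapses at the key step, and the grading argument (that the lcm of the vertex labels of $[m_j,\sigma]$ is $m_j\prod_{t\in\sigma}x_t$) would also need to be redone for the degenerate vertex sets. The actual proof in \cite{DM14} --- recapitulated in this paper's proof of Theorem 3.1 --- confronts the degeneracy directly: the cell $U(h,\tau)$ is built as the union, over all permutations $\sigma$ of $\tau$, of the simplices $\ch(h,\tau,\sigma)$ whose vertices are the iterated images of $b$ applied in the order $\sigma$; regularity is what guarantees each nondegenerate $\ch(h,\tau,\sigma)$ is a genuine $|\tau|$-simplex, and degenerate chains yield simplices that are faces of nondegenerate ones. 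Showing that this union of simplices is a regular cell with the correct boundary is the real content of the theorem, and it is precisely the part your cube lemma was meant to replace. If you want to keep the poset route, you must work with the quotient of the cube face lattice by the identifications induced by $b$ and verify the sphere condition for the resulting intervals, which amounts to the same simplicial-union analysis.
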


Any quasi-stable monomial ideal $I$ has linear quotients with respect to its $P$-ordered Pommaret basis, and the colon ideals are generated by the non-multiplicative variables of the corresponding generator:

\begin{proposition}[\cite{AFSS15}, Proposition 7.2 and \cite{HSS12}, Proposition 26]
Let $\Hk=\{h_1,\dots,h_r\}$ be a $P$-ordered monomial Pommaret basis of the quasi-stable monomial ideal $I$. Then $I$ possesses linear quotients with respect to the basis $\Hk$ and $\langle h_{\alpha+1},\dots,h_r\rangle:\langle h_\alpha\rangle=\langle\overline{\Xk}_p(h_\alpha)\rangle$ for all $\alpha=1,\dots,r-1$.
\end{proposition}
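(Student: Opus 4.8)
The plan is to prove the second assertion --- that $\langle h_{\alpha+1},\dots,h_r\rangle:\langle h_\alpha\rangle=\langle\overline{\Xk}_P(h_\alpha)\rangle$ for each $\alpha$ --- since it implies the first: an ideal generated by variables has, tautologically, the form demanded by linear quotients, with $\set(h_\alpha)=\overline{\Xk}_P(h_\alpha)$. Fixing $\alpha$, I would prove the two inclusions separately. Two facts will do all the work: the involutive decomposition $M(I)=\bigsqcup_\gamma\Ck(h_\gamma)$ is \emph{disjoint}, so every monomial of $I$ has a unique involutive divisor in $\Hk$; and the $P$-ordering, being ``by class, then lexicographically'', has the feature that two monomials of the same class agreeing in all coordinates above an index $j$ greater than their class are ordered by their $j$-th exponent, the larger one coming later.

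First, ``$\supseteq$''. For $x_k\in\overline{\Xk}_P(h_\alpha)$, write $x_kh_\alpha=t_{\alpha;k}h_{\Delta(\alpha,k)}$ as above; it is enough to show $\Delta(\alpha,k)>\alpha$, for then $h_{\Delta(\alpha,k)}$ is among $h_{\alpha+1},\dots,h_r$ and divides $x_kh_\alpha$. Since $x_k\notin\Xk_P(h_\alpha)$ we have $x_kh_\alpha\notin\Ck(h_\alpha)$, so $\Delta(\alpha,k)\neq\alpha$; and since $k>c:=\cls(h_\alpha)$, the variables $x_1,\dots,x_{c-1}$ divide neither $x_kh_\alpha$ nor $h_{\Delta(\alpha,k)}$, so $\cls(h_{\Delta(\alpha,k)})\ge c$. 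If that class is $>c$, the $P$-ordering already places $h_{\Delta(\alpha,k)}$ after $h_\alpha$. If it equals $c$, then $t_{\alpha;k}\in\kb[x_1,\dots,x_c]$, and matching exponents in $x_kh_\alpha=t_{\alpha;k}h_{\Delta(\alpha,k)}$ shows $h_{\Delta(\alpha,k)}$ agrees with $h_\alpha$ outside coordinates $c$ and $k$, is (weakly) smaller at $c$, and strictly larger at $k>c$; by the quoted feature it comes later. Either way $\Delta(\alpha,k)>\alpha$, so $x_k\in\langle h_{\alpha+1},\dots,h_r\rangle:\langle h_\alpha\rangle$.

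Next, ``$\subseteq$'', by contradiction: suppose $x^\nu h_\alpha\in\langle h_{\alpha+1},\dots,h_r\rangle$ with $x^\nu\in\kb[\Xk_P(h_\alpha)]$, so $m:=x^\nu h_\alpha\in\Ck(h_\alpha)$ and $m_j=(h_\alpha)_j$ for $j>c:=\cls(h_\alpha)$, and fix $h_\beta\mid m$ with $\beta>\alpha$; being $P$-ordered, $\cls(h_\beta)\ge c$. If $\cls(h_\beta)=c$: from $(h_\beta)_j\le m_j=(h_\alpha)_j$ for $j>c$, either $h_\beta$ and $h_\alpha$ differ at some coordinate above $c$, whence $\beta<\alpha$ by the feature, or they differ only in the exponent of $x_c$, in which case one of them lies in the other's involutive cone (or they coincide), violating disjointness (or $\beta>\alpha$). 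If $\cls(h_\beta)>c$: then $(h_\beta)_j=0\le(h_\alpha)_j$ for $j\le c$ and $(h_\beta)_j\le(h_\alpha)_j$ for $j>c$, so $h_\beta\mid h_\alpha$, which is forbidden by the lemma below. In each case we have a contradiction, so both inclusions hold.

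The lemma used above, which I expect to be the crux, says: \emph{no element of a Pommaret basis $\Hk$ divides another element of $\Hk$ of strictly smaller class.} To prove it I would use finiteness of $\Hk$: given $h_\beta\mid h_\alpha$ in $\Hk$ with $\cls(h_\beta)>\cls(h_\alpha)$, build a sequence $h^{(0)}=h_\beta,h^{(1)},h^{(2)},\dots$ in $\Hk$, each dividing $h_\alpha$, of class $>\cls(h_\alpha)$, and of strictly increasing index. Indeed, such an $h^{(s)}$ is $\neq h_\alpha$, so $h_\alpha=w\,h^{(s)}$ with $w\neq1$; since $h_\alpha$ lies only in $\Ck(h_\alpha)$, we get $w\notin\kb[\Xk_P(h^{(s)})]$, so some non-multiplicative variable $x_l$ of $h^{(s)}$ divides $w$; take $h^{(s+1)}$ to be the involutive divisor of $x_lh^{(s)}$. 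It divides $h_\alpha$, its index exceeds that of $h^{(s)}$ by the ``$\supseteq$'' step, and its class is $\ge\cls(x_lh^{(s)})=\cls(h^{(s)})$ (a divisor has class at least that of the monomial it divides). An infinite strictly increasing sequence of indices in a finite set is impossible. This step --- and nothing else --- genuinely invokes quasi-stability, namely the \emph{finiteness} of the Pommaret basis; everything else is formal. (One could also derive the proposition from the iterated mapping-cone presentation of the Pommaret--Seiler resolution in \cite{AFSS15} by a rank count, but the above is self-contained; the statement is \cite[Prop.~7.2]{AFSS15} and \cite[Prop.~26]{HSS12}.)
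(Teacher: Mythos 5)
This proposition is imported into the paper by citation (to \cite{AFSS15} and \cite{HSS12}); the paper itself contains no proof, so there is no internal argument to compare yours against. Judged on its own, your proof is correct and self-contained, and it is a genuinely useful reconstruction: everything is derived from the two defining features you isolate, namely the disjointness of the involutive cones and the ordering property of the $P$-ordering. Both inclusions check out: for ``$\supseteq$'' the exponent comparison in $x_kh_\alpha=t_{\alpha;k}h_{\Delta(\alpha,k)}$ correctly forces $h_{\Delta(\alpha,k)}$ to come strictly later, and for ``$\subseteq$'' the case split on $\cls(h_\beta)$ is exhaustive and each branch lands on a real contradiction (the sub-case where $h_\alpha$ and $h_\beta$ differ only in the exponent of $x_c$ is handled correctly in both directions of the inequality, which is needed since you only control $(h_\beta)_j\le(h_\alpha)_j$ for $j>c$). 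Your auxiliary lemma --- no element of $\Hk$ divides another element of strictly smaller class --- is indeed the crux of the ``$\subseteq$'' inclusion, and your termination argument via a strictly increasing sequence of indices (which reuses the ``$\supseteq$'' step and is where finiteness of the Pommaret basis enters) is sound and non-circular.

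One point deserves to be made explicit rather than assumed: the paper describes the $P$-ordering only as sorting ``first by class and within each class lexicographically,'' and your entire argument hinges on the precise convention --- ascending by class, and within a class, lexicographic with respect to the reversed variable order $x_n>\cdots>x_1$ (so that the element with the larger exponent at the highest differing index comes later). Your stated ``feature'' is the correct reading (it matches the paper's footnote on the reversed variable convention and both worked examples), but since a different lex convention would break both inclusions, you should state the definition you are using and verify it against the source, rather than treating it as a consequence of the informal description.
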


Observe that in this case, $\Hk$ is in general a non-minimal generating set of $I$, and hence the iterated mapping cone resolution obtained (i.e. the Pommaret-Seiler resolution of $I$, see \cite{AFSS15}) is not the minimal free resolution. Minimality is only obtained if $I$ is stable. Using the iterated mapping cone structure of the Pommaret-Seiler resolution we can prove that it is cellular by defining a regular decomposition function for $I$ with respect to its Pommaret basis $\Hk$.

\begin{theorem}\label{th:cellular}
The Pommaret-Seiler resolution of a quasi-stable monomial ideal is cellular.
\end{theorem}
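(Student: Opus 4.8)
The plan is to exhibit a regular decomposition function for $I$ with respect to its $P$-ordered Pommaret basis $\Hk=\{h_1,\dots,h_r\}$ and then invoke the construction of \cite{DM14}. By the Proposition of \cite{AFSS15} and \cite{HSS12} quoted above, $I$ has linear quotients with respect to $\Hk$, with $\set(h_\alpha)=\overline{\Xk}_P(h_\alpha)$ for every $\alpha$, and the iterated mapping cone of $I$ along this ordering is precisely the Pommaret-Seiler resolution $(\mathbb{P},\partial)$. It therefore suffices to produce a regular decomposition function $b\colon M(I)\to\Hk$ and to observe that the explicit regular CW-complex built in \cite{DM14} from a linear-quotients ordering together with a regular decomposition function never uses minimality of the generating set.

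For the decomposition function we take the involutive partition itself. Since $\Hk$ is a Pommaret basis, $I=\bigoplus_{h\in\Hk}h\cdot\kb[\Xk_P(h)]$, so the involutive monomial cones $\Ck(h_\alpha)$ partition $M(I)$: every $x^\mu\in M(I)$ lies in a unique $\Ck(h_\alpha)$, and we set $b(x^\mu):=h_\alpha$. Equivalently, $b(x^\mu)$ is the unique $h_\alpha\in\Hk$ with $x^\mu=h_\alpha\cdot t$ and $t\in\kb[\Xk_P(h_\alpha)]$; this is the quasi-stable analogue of $\beg(x^\mu)$, and $t$ plays the role of $\endfin(x^\mu)$. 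In particular $b(h_\alpha)=h_\alpha$, and for a non-multiplicative variable $x_k\in\overline{\Xk}_P(h_\alpha)$ we have $b(x_kh_\alpha)=h_{\Delta(\alpha,k)}$ directly from the definition of $\Delta(\alpha,k)$ and $t_{\alpha;k}$.

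Regularity of $b$ then reduces to the inclusion $\set(h_{\Delta(\alpha,k)})\subseteq\set(h_\alpha)$, i.e. $\overline{\Xk}_P(h_{\Delta(\alpha,k)})\subseteq\overline{\Xk}_P(h_\alpha)$, which since multiplicative variables form initial segments of $\Xk$ is equivalent to $\cls(h_\alpha)\leq\cls(h_{\Delta(\alpha,k)})$. This is the one genuinely new algebraic input and is what I expect to state and prove carefully. It follows from a divisibility count: from $x_kh_\alpha=t_{\alpha;k}h_{\Delta(\alpha,k)}$ we get $h_{\Delta(\alpha,k)}\mid x_kh_\alpha$, hence $\cls(h_{\Delta(\alpha,k)})\geq\cls(x_kh_\alpha)$ because the class of a monomial never exceeds the class of any of its divisors; and $\cls(x_kh_\alpha)=\cls(h_\alpha)$ since $x_k$ is non-multiplicative for $h_\alpha$, so $k>\cls(h_\alpha)$ and multiplying by $x_k$ does not lower the class. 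Thus $\cls(h_\alpha)\le\cls(h_{\Delta(\alpha,k)})$ and $b$ is regular.

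Finally I would note that Theorem 3.11 of \cite{DM14} restricts to minimal generating sets only in order to conclude \emph{minimality} of the resulting resolution; the cell complex produced there, and the verification that its labelled cellular chain complex equals the iterated mapping cone, depend only on the ordering witnessing linear quotients and on the regular decomposition function. Applying that construction verbatim to the (possibly non-minimal) generating set $\Hk$ with the function $b$ above yields a regular CW-complex whose associated cellular resolution is exactly $(\mathbb{P},\partial)$, so the Pommaret-Seiler resolution is cellular. The main obstacle is precisely justifying this mild generalization to a non-minimal generating set, namely checking that nothing in the \cite{DM14} argument secretly relies on minimality, together with the (routine but necessary) bookkeeping that the differential coming from the cellular structure matches Seiler's formula (\ref{eq:PSdifferential}); the class inequality $\cls(h_\alpha)\le\cls(h_{\Delta(\alpha,k)})$ is the key step that makes the whole argument go through.
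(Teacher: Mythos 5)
Your proposal is correct and follows essentially the same route as the paper's proof: the same decomposition function $b$ given by the unique involutive divisor, regularity via the inequality $\cls(h_\alpha)\leq\cls(h_{\Delta(\alpha,k)})$, and the observation that the construction of \cite{DM14} uses minimality of the generating set only to conclude minimality of the resolution. Your divisibility argument for the class inequality is a welcome detail that the paper merely asserts.
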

\begin{proof}

Let $I$ be a quasi-stable ideal, $\Hk=\{h_1,\dots,h_r\}$ its Pommaret basis and $M(I)$ the set of monomials in $I$. The fact that $\Hk$ is an involutive basis for $I$ means that $M(I)=\bigcup_{h_i\in\Hk} \Ck(h_i)$, where the union is disjoint, hence every monomial in $M(I)$ has a unique involutive divisor in $\Hk$.

We now define the decomposition function $b:M(I)\rightarrow \Hk$ as $b(x^\mu)=h_\alpha$ where $h_\alpha$ is the unique element of $\Hk$ that is an involutive divisor of $x^\mu$. To see that $b$ is regular observe that $\set(h_\alpha)=\overline{\Xk}_P(h_\alpha)$. Now, for each $x_t\in\overline{\Xk}_P(h_\alpha)$ we have that $b(x_th_\alpha)=h_{\Delta(\alpha,t)}$ and $\cls(h_{\Delta(\alpha,t)})\geq \cls(h_\alpha)$, hence $\set(h_{\Delta(\alpha,t)})\subseteq \set(h_\alpha)$ and $b$ is regular. 

For every monomial ideal with linear quotients and a regular decomposition function $b$ there is an explicit expression of the differential in the mapping cone resolution, that is given in Theorem 1.12 \cite{HT02} which, in our case has the following expression:
\begin{equation}\label{eq:HTdifferential}
\partial([h,u])=\sum_{j=1}^i(-1)^{i-j} \left( {u_j}[h,\frac{u}{{u_j}}]-\frac{{u_j}\cdot h}{b({u_j}\cdot h)}[b({u_j}\cdot h),\frac{u}{{u_j}}]\right),
\end{equation}
for $h\in\Hk$ and $u$ is any squarefree monomial formed  only by non-multiplicative variables of $h$. Observe that $b({u_j}\cdot h)=h_{\Delta_{h,u_j}}$ and $\frac{{u_j}\cdot h}{b({u_j}\cdot h)}=t_{h,u_j}$, and hence this explicit description of the differentials is identical to the one for the Pommaret-Seiler resolution shown in (\ref{eq:PSdifferential}).

The rest of the proof follows the lines of the proof of Theorem 3.11 in \cite{DM14} except for the minimality of the resolution. It is a constructive proof, in the sense that it consists on building a regular CW-complex that supports the resolution using the iterated mapping cone construction. The procedure gives a geometric interpretation of the construction of the iterated algebraic mapping cone for ideals with a regular decomposition function as seen in \cite{DM14} and \cite{G15}. 

Assuming that the ideal $I$ has linear quotients with respect to the generating set $\Hk=\{h_1,...,h_r\}$, then since $ \left( I_{j-1} :\langle h_j \rangle \right)$ is generated by a set of the variables, its minimal free resolution can be easily obtained by computing the Taylor resolution. This minimal resolution is in fact supported on a simplex with as many vertices as elements in $set(h_j)$, hence it is simplicial. The lifted map between the resolutions of $ R/\langle set(h_j) \rangle$ and $R/I_{j-1}$ induces a cellular map of $\Delta$ into $X_{j-1}$, where $\Delta$ represents the simplex associated to the Taylor resolution of $ R/\langle set(h_j) \rangle$, and $X_{j-1}$ is the CW-complex inductively constructed in the previous steps of the iterated mapping cone construction. Therefore, the mapping cone construction can be realized as an iterative procedure where in each step a geometric simplex is glued onto the cellular resolution already built in the previous steps. The fact that this resolution is minimal depends on whether $\Hk$ is the minimal generating set of $I$ (i.e. whether $I$ is stable) by Lemma 1.5 in \cite{HT02}.\\
The regular decomposition function $b$ defined for $I$ knowing that it has linear quotients is now used to explicitly describe the geometric differential (boundary operator) of the CW-complex constructed by gluing iteratively the Taylor simplices obtained by the above procedure. This description is given in Theorem 1.12 \cite{HT02} (see also Theorem  2.6 \cite{DM14} or Construction 2.5 \cite{G15}).

In the case that $I$ is a quasi-stable ideal and $\Hk$ its Pommaret basis, we extend this construction to the Pommaret-Seiler resolution by means of the so-called $P$-graph of the Pommaret basis, which is defined in \cite[Section 5]{S09b}, see also \cite{PR05,C21}. The $P$-graph of $\Hk$, consists of a vertex for each $h_\alpha\in\Hk$ and a directed edge from $h_\alpha$ to $h_{\Delta(\alpha,t)}$ for each $h_\alpha$ in $\Hk$ and each $x_t\in\overline{\Xk}_P(h_\alpha)$. The $0$-cells of the $CW$-structure for the Pommaret-Seiler resolution of $I$ are the vertices of the $P$-graph of $\Hk$ considered as points in $\mathbb{R}^n$.
To form the higher dimensional cells, we follow the construction in \cite{DM14} and \cite{M10}. Let $h\in\Hk$, $\tau=\{j_1,\dots,j_p\}$ such that $\{x_{j_1},\dots,x_{j_p}\}\subseteq\overline{\Xk}_P(h)$ with $j_1<\cdots<j_p$ and $\sigma$ a permutation of $\tau$. We define $\ch(h,\tau,\sigma)$ to be the subset of $\mathbb{R}^n$ obtained as the convex hull of the elements of $\Hk$ that we reach by applying the decomposition function $b(x_th_i)=h_{\Delta(h_i,t)}$ in the order prescribed by $\sigma$, i.e. applying first $b(x_{j_1}h)$, then $b(x_{j_2}h_{\Delta(h,j_1)})$ and so on. If there are no repetitions of elements of $\Hk$ involved in the description of $\ch(h,\tau,\sigma)$ then, because of the regularity of the decomposition function of $I$, $\ch(h,\tau,\sigma)$ is a $p$-dimensional simplex, and we say that $\ch(h,\tau,\sigma)$ is non-degenerate. Otherwise, we say that $\ch(h,\tau,\sigma)$ is degenerate and it is in fact a face of $\ch(h,\tau,\sigma')$ where $\sigma'$ is another permutation of $\tau$ such that $\ch(h,\tau,\sigma')$ is non-degenerate.

Now, the cell $U(h,\tau)$ is defined as the union of the $\ch(h,\tau,\sigma)$ over all permutations $\sigma$ of $\tau$. For these cells we have a geometrical differential map
\[
d(U(h,\tau))=\sum_{i=1}^p(-1)^iU(h,\tau\setminus j_i)-\sum_{i=1}^p(-1)^iU(h_{\Delta(h,j_i)},\tau\setminus j_i).
\]
Finally, by adding the monomials that correspond to the labelling of each cell in the CW-complex in these differentials, we obtain the differential in (\ref{eq:PSdifferential}) and have that the described structure is indeed the CW-structure that supports the Pommaret-Seiler resolution of $I$, which in our case has the following expression:
\begin{equation}\label{eq:CWdifferential}
\partial([h,u])=\sum_{j=1}^i(-1)^{i-j} \left( {u_j}[h,\frac{u}{{u_j}}]-\frac{{u_j}\cdot h}{b({u_j}\cdot h)}[b({u_j}\cdot h),\frac{u}{{u_j}}]\right),
\end{equation}
for $h\in\Hk$ and $u$ any squarefee monomial formed by non-multiplicative variables of $h$. One can easily check that this explicit description of the differentials is identical to the one for the Pommaret-Seiler resolution, given by \cite{S09b}.
\end{proof}

\section{Reduction of the Pommaret-Seiler resolution}\label{sec:reduction}
The Pommaret-Seiler resolution for quasi-stable ideals is known to be non-minimal, nevertheless, some of the homological invariants of this resolution can be read off directly from it. In particular, we know that it is a resolution of minimal length \cite{S09b}. If we denote by $\Cf_I$ the CW-complex described in the previous section, then we have that the dimension of $\Cf_I$ equals the projective dimension of $I$. In this section we show that $\Cf_I$ can be reduced using Discrete Morse Theory \cite{F01}, and since this complex supports the Pommaret-Seiler resolution, we can equivalently use the algebraic formulation in \cite{JW09}. This fact suggests an algorithm for reducing the Pommaret-Seiler resolution so that we obtain the minimal one. This kind of reductions based on Discrete Morse Theory has been used before in the context of monomial ideals, see \cite{AFG20,BM20,CEFMMSS22}.

Let $I$ be a quasi stable ideal, $\mathcal{H}$ its finite Pommaret basis and $d$ the minimal class of any generator in $\mathcal{H}$. Let $\Gamma_{\mathbb{P}}$ be the directed graph associated to $(\mathbb{P},\partial)$ the Pommaret-Seiler resolution of $I$, as described in \cite[Section 2]{S06}, i.e. the vertices of $\Gamma_{\mathbb{P}}$ are the generators of ${\mathbb{P}}$ and there is an edge from $[h_\alpha,u]$ to $[h_\beta,v]$ if the coefficient of $[h_\beta,v]$ in $\partial([h_\alpha,u])$ is different from $0$. Observe that in such case $\deg(u)=\deg(v)+1$. Following \cite{S06} we will use the following terminology.

\begin{definition}
A {\em partial matching} on a directed graph $G=(V, E)$ is a subset $A\subseteq E$ of the edges of $G$ such that any vertex is incident to at most one edge in $A$. For such a partial matching we denote by $G^A=(V, E^A)$ the directed graph with the same vertices and edges as $G$ and such that the edges in $A$ are reversed.

We define $A^+\subseteq V$ to be the subset of those vertices that are the targets of the reversed arrows in $A$, and $A^-\subseteq V$ to be the set of their sources. Finally $A^0\subseteq V$ is the set of vertices of $G$ which are not incident to any of the reversed arrows. These are called {\em critical} vertices.

A Morse matching on the directed graph $\Gamma_{\mathbb{P}}$ is a partial matching $A$ satisfying the following conditions:
\begin{itemize}
\item For every edge $a\rightarrow b$ where $a\in \mathbb{P}_i$, $b\in \mathbb{P}_{i-1}$, the projection of the map $\partial(a)$ to the component of $\mathbb{P}_{i-1}$ generated by $b$ is an isomorphism.
\item Every set of generators $\mathbb{P}_i$ possesses a well-founded partial order $<$ such that for any $a,c\in \mathbb{P}_i$ for which there is a path $a\rightarrow b\rightarrow c$ in $\Gamma_{\mathbb{P}}^A$ we have that $c<a$. We say that such an order respects the matching $A$.
\end{itemize}
\end{definition}

The idea behind this construction is that any Morse matching induces the construction of a smaller complex with the same homology of $(\mathbb{P},\partial)$, which is called a Morse reduction of  $(\mathbb{P},\partial)$ by $A$. If the resulting complex has no isomorphisms then it cannot be further reduced, the new complex is a minimal resolution of $I$ and we say that $A$ is a {\em perfect Morse matching}.

With these definitions at hand, we describe a procedure to reduce $(\mathbb{P},\partial)$ to a minimal resolution of the corresponding ideal $I$. We proceed variable by variable, starting from $x_n$ down to $x_{d+1}$. Consider the following partial matching in $\Gamma_{\mathbb{P}}$:

\[
V_n=\{[h_\alpha,u]\rightarrow [h_\beta,u/x_n] \mbox{ if the coefficient of } [h_\beta,u/x_n] \mbox{ in } \partial([h_\alpha,u]) \mbox{ is } 1 \mbox{ or }-1\}.
\]

Now for each variable $x_i$ with $i$ from $n-1$ down to $d+1$ consider 
\begin{align*}
 V_i=&\{[h_\alpha,u]\rightarrow [h_\beta,u/x_i] \mbox{ if the coefficient of } [h_\beta,u/x_i] \mbox{ in } \partial([h_\alpha,u]) \mbox{ is } 1 \mbox{ or }-1\\
&\mbox{ and none of } [h_\alpha,u],\, [h_\beta,u/x_i] \mbox{ incides in an edge in } V_j \mbox{ for } j>i\}.   
\end{align*}

It is clear that $V=\bigcup_{i=d+1}^{n}V_i$ is a partial matching on $\Gamma_{\mathbb{P}}$, the fact that it is a (perfect) Morse matching needs to be proven.

\begin{proposition}\label{prop:matching}
The partial matching $V=\bigcup_{i=d+1}^n V_i$ is a perfect Morse matching in $\Gamma_{\mathbb{P}}$.
\end{proposition}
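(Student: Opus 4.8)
The plan is to verify the two defining properties of a Morse matching for $V$ and then its perfectness, after reducing everything to a statement about a single multidegree. Throughout we use that $(\mathbb{P},\partial)$ is $\NN^n$-graded and that, by (\ref{eq:PSdifferential}), the coefficient of $[h_\beta,v]$ in $\partial([h_\alpha,u])$ equals $(\pm 1)\,x^{\gr([h_\alpha,u])-\gr([h_\beta,v])}$; in particular such a coefficient is a unit of $R$ precisely when the two generators carry the same multidegree.

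\textbf{The isomorphism condition, and that $V$ is a partial matching.} By construction every edge of $V$ carries coefficient $\pm 1$, and multiplication by $\pm1$ is an isomorphism of $R$, so the first bullet of the definition holds at once. The same remark shows $V$ is a partial matching: for fixed $u$ and fixed $x_i\mid u$ the only term of $\partial([h_\alpha,u])$ whose coefficient can be a unit is $-t_{\alpha;i}[h_{\Delta(\alpha,i)},u/x_i]$, and $h_\alpha$ determines $h_\alpha/x_i$; hence each $V_i$ is a matching, and the exclusion clause in its definition keeps any vertex from meeting two of the $V_i$.

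\textbf{Reduction to a single multidegree.} Since every edge of $V$ joins generators of equal multidegree, $V$ is the disjoint union of sub-matchings $V_\nu$, $\nu\in\NN^n$, where $V_\nu$ consists of the edges of $V$ joining generators of multidegree $\nu$ and is a matching on the finite $\kb$-complex $C_\nu:=(\mathbb{P}\otimes_R\kb)_\nu$, whose homology is $\mathrm{Tor}^R_\bullet(I,\kb)_\nu$ and whose differential is the degree-zero part of $\partial$. It then suffices to prove, for each $\nu$: \emph{(i)} the relation on the generators of each fixed homological degree of $C_\nu$ given by ``$a$ precedes $c$ whenever there is a path $a\to b\to c$ in $\Gamma_{\mathbb{P}}^A$'' is acyclic; and \emph{(ii)} no gradient path of $C_\nu$ relative to $V_\nu$ joins two distinct critical cells. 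Indeed, for a path $a\to b\to c$ in $\Gamma_{\mathbb{P}}^A$ with $a,c\in\mathbb{P}_i$ the reversed step has unit coefficient, so $\gr(c)=\gr(b)\preceq\gr(a)$; hence ordering $\mathbb{P}_i$ first by a fixed term order applied to $\gr$ and then, inside each multidegree, by any linear extension of the acyclic relation in \emph{(i)} gives a well-founded order respecting $V$, so $V$ is a Morse matching. For perfectness, an entry of the reduced differential of $\mathbb{P}$ between two critical cells lies outside $\mathfrak{m}$ only if some gradient path between them has total degree zero, hence lies entirely inside a single $C_\nu$; \emph{(ii)} excludes this, so the reduced complex is the minimal free resolution of $I$ and $V$ is a perfect Morse matching.

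\textbf{The combinatorial core.} This is where I expect the real work, and the main obstacle, to lie. By (\ref{eq:PSdifferential}) every unit-coefficient edge of $\Gamma_{\mathbb{P}}$ has the form $[h_\alpha,u]\to[h_\beta,u/x_k]$ with $h_\beta=x_kh_\alpha\in\Hk$ (the case $t_{\alpha;k}=1$); call $x_k$ its \emph{direction}. A length-two gradient step changes the exponent vector of the Pommaret generator by $\pm(e_k-e_\ell)$, where $x_k$ and $x_\ell$ are the directions of the unmatched and of the matched edge at the common intermediate vertex. What one must show is that these directions are always comparable in a way forcing the exponent vector to decrease strictly in a fixed term order along each gradient step, which yields both the acyclicity in \emph{(i)} and, more strongly, that no gradient path can reach a second critical cell, that is \emph{(ii)}. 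Establishing this is where the top-down, variable-by-variable design of $V$ must be combined with the fine structure of the Pommaret basis: the uniqueness of the involutive divisor in $\Hk$ sharply constrains when a monomial $x_kh_\alpha$ can again belong to $\Hk$, and hence the shape of the $P$-graph used in the proof of Theorem~\ref{th:cellular}; the argument runs by a case analysis on which endpoint of the matched edge the intermediate vertex is, and on the stage at which a competing unmatched edge was rejected, translating such a rejection into an identity among Pommaret-basis monomials. I expect this bookkeeping to be the longest and most delicate part of the write-up; the remainder — the isomorphism condition, the grading reduction, and the assembly of the order on $\mathbb{P}_i$ — is routine.
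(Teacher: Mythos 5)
Your setup is correct and matches the paper's strategy in all its preliminary steps: the isomorphism condition and the partial-matching property are immediate from the unit-coefficient requirement; every edge of $V$ preserves the multidegree, so a cycle (or a degree-zero gradient path) must live inside a single multidegree strand; and a well-founded order respecting $V$ can be assembled from a term order on multidegrees refined by any acyclic relation within each strand. The paper does the same, invoking Lemma~1 of Sk\"oldberg's paper to reduce everything to the non-existence of directed cycles in $\Gamma_{\mathbb{P}}^V$, and observing that such a cycle must consist entirely of unit-coefficient edges between two consecutive homological degrees. I will also note that your formulation of perfectness, via item \emph{(ii)}, is actually the more careful one: the paper only argues that the original isomorphism edges disappear after the reduction, without addressing unit entries that could in principle be created by gradient paths between critical cells.

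However, there is a genuine gap: the combinatorial core, which you explicitly defer (``this is where I expect the real work\dots to lie''), is precisely the content of the paper's proof, and without it nothing has been established. The missing argument is the following. Write a hypothetical cycle in $\Gamma_{\mathbb{P}}^V$ as an alternation of reversed (``upward'') arrows, each inserting a variable $x_{i_k}$ into the exterior part $u$, and regular (``downward'') arrows, each extracting a variable $x_{j_k}$. The key claim is that for every consecutive pair $[h,u]\overset{x_{i_k}}{\longmapsto}[h',u']\overset{x_{j_k}}{\longrightarrow}[h'',u'']$ one has $x_{i_k}>x_{j_k}$: if $x_{j_k}$ were larger, the unit-coefficient edge extracting $x_{j_k}$ would already have been reversed at the earlier stage $V_{j_k}$ of the top-down construction (variables are processed from $x_n$ downward), so the vertex $[h',u']$ would be matched there and the upward arrow inserting $x_{i_k}$ could not belong to $V$. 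Granting this, one tracks the largest variable inserted along the cycle: since each extraction must be immediately preceded by the insertion of a strictly larger variable, attempting to remove the largest inserted variable forces the insertion of a yet larger one, producing an unbounded ascending chain --- so the cycle cannot close. This is exactly the ``directions are always comparable in a way forcing a strict decrease'' statement you postulate in your final paragraph, but you neither state the inequality $x_{i_k}>x_{j_k}$ nor derive it from the order in which the $V_i$ are built, and the subsequent extremal-variable argument is absent. As it stands, the proposal is a correct reduction of the proposition to its hardest lemma, not a proof of it.
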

\begin{proof}
In order to show that it is a Morse matching, observe that from Lemma 1 in \cite{S06}, it is enough to show that there are no directed cycles in $\Gamma_{\mathbb{P}}^{V}$, the digraph obtained from $\Gamma_{\mathbb{P}}$ by reversing all the edges in $V$.

For this, first observe that if any such cycle exists, then all the edges involved occur between some pair of consecutive free modules $P_a$ and $P_{a+1}$ for some $a$ in $\{1,\dots,\pd(I)\}$. This is because whenever there is an edge from any $[h,u]$ in $P_{a-1}$ to $[h',ux_i]$ in $P_a$, there cannot be another edge from $[h',ux_i]$ to $[h'',ux_ix_j]$, due to the definition of $V_i$ and $V_j$, since $[h',ux_i]$ is adjacent to a reversed edge in either $V_i$ or $V_j$ but not both. Any cycle in $\Gamma_{\mathbb{P}}^{V}$ involving $P_{a-1}$, $P_a$ and $P_{a+1}$ must contain at least two such consecutive edges, hence no such cycle exists.

Consider then cycles involving edges in $P_a$ and $P_{a+1}$ for some $a$. These have the form 
\[
[h,u]\overset{x_{i_1}}{\longmapsto}[h',u']\overset{x_{j_1}}{\longrightarrow}[h'',u'']\overset{x_{i_2}}{\longmapsto}\cdots\overset{x_{j_k}}{\longrightarrow}[h,u],
\]
where each ticked arrow $\mapsto$ indicates a (reversed) element of $V$ and the usual arrows $\rightarrow$ indicate regular edges in $\Gamma_{\mathbb{P}}$. The variable above each arrow indicates the variable that differs between $u$ and $u'$ etc. We call the ticked arrows {\em upward} arrows and the regular ones {\em downward} arrows. For any generator $[h,u]$ of $\mathbb{P}$, we call $h$ its symmetric part and $u$ is its exterior part.  We say that upward arrows {\em insert} variables in the exterior part $u$ of the corresponding generator $[h,u]$, and that downward arrows {\em extract} variables from it. Upward arrows always denote isomorphisms by definition, i.e. the coefficient of the source in the differential of the target is a nonzero scalar. Observe that if any downward arrow is not an isomorphism then the total degree of the target generator is strictly smaller than that of the source generator, while it remains constant for isomorphisms. Hence, in order to have a cycle and be back to $[h,u]$ we cannot have any non-isomorphism for degree reasons.

Therefore the cycle
\[
[h,u]\overset{x_{i_1}}{\longmapsto}[h',u']\overset{x_{j_1}}{\longrightarrow}[h'',u'']\overset{x_{i_2}}{\longmapsto}\cdots\overset{x_{j_l}}{\longrightarrow}[h,u],
\]
must be formed only by isomorphisms. Upward arrows are all elements of $V$ and downward arrows are not. Now, everytime we move by a ticked arrow, we introduce a variable, $x_{i_1}$, $x_{i_2}$ etc. into the exterior part (that started with $u$) of the corresponding generator. And every time we move by an usual arrow, we extract a variable $x_{j_1}$, $x_{j_2}$ etc. from the exterior part of the corresponding generator. Since to close the cycle we must get back to $u$ this means that we extracted exactly the same variables that we introduced in the exterior part (possibly in a different order). Observe that for any cycle and every consecutive pair of arrows $[h,u]\overset{x_{i_k}}{\longmapsto}[h',u']\overset{x_{j_k}}{\longrightarrow}[h'',u'']$ we have $x_{i_k}>x_{j_k}$. The reason is that if $x_{j_k}$ was bigger, then it would be an isomorphism involving $[h'',u'x_{j_k}]$ belonging to $V_{j_k}$ (or some other previous set in the construction process of $V$) which would have been reversed in the construction process of $V$ and therefore the upward arrow introducing $x_{i_k}$ would not be reversed, hence being a regular downward arrow. Let now $u_m$ and $u_M$ be the smallest and biggest variables in $u$ respectively. Without loss of generality, we can consider the cycle starting at $[h,u]\in P_a$. Observe that in such cycle, whenever we extract a variable $x_j$ via a downward arrow, the immediate previous arrow is an upward one that introduces a variable $x_i$ that must be bigger than $x_j$.  The first arrow of the cycle introduces $x_{i_1}$, which will have to be extracted later in the cycle. Note that $u_m<x_{i_1}<u_M$. The first inequality comes from the fact that the variable that we extract after introducing $x_{i_1}$ has to be smaller than it, as said before. For the second inequality, observe that if $x_{i_1}$ was bigger than any variable in $u$ we could not use any variable in $u$ to extract it later in the cycle, and therefore the cycle could not be completed. Now, when $x_{i_1}$ is extracted from the cycle, the previous upward arrow can be either a bigger variable that was introduced in $u$ or a variable already in $u$ which is bigger than $x_{i_1}$. In the first case, we have again a variable $x_i$ not in $u$ that we need to eventually extract, which satisfies that $x_i>x_{i_1}$, hence we obtain an ascending sequence of variables that we have to extract and this makes impossible to close the cycle. In the second case, we need to introduce a variable $x_j$ bigger than $x_{i_1}$ that was already in $u$. But this means that at some point we must have extracted $x_j$ using another variable $x_k$ either in $u$ or not in $u$. In both cases we end up again with an ascending sequence of variables that eventually reaches $u_M$ or some other bigger variable, and therefore the cycle can not be closed. In short, assuming there exists a cycle, the biggest variable in it must be extracted from the exterior part at some point, but for this we need to introduce a bigger variable just before that extraction, and this contradicts the fact that the extracted variable is the biggest one.

Finally, to see that $V$ is perfect observe that no isomorphisms remain in $\Gamma^V_{\mathbb{P}}$. In fact, any isomorphism $[h,u]\mapsto[h/x_i,ux_i]$ is either reversed at the step $V_i$ or at least one of $[h,u]$ or $[h/x_i,ux_i]$ is involved in step $V_j$, $j>i$ and therefore the isomorphism is not present in $\Gamma^V_{\mathbb{P}}$. 
\end{proof}

As a result of this proposition we have the main result in this section.

\begin{theorem}
$\Gamma^V_{\mathbb{P}}$ is a minimal free resolution of $I$.
\end{theorem}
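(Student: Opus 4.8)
The plan is to combine Proposition~\ref{prop:matching} with the standard machinery of algebraic Discrete Morse Theory, so that the theorem becomes essentially a corollary of the fact that $V$ is a \emph{perfect} Morse matching. First I would invoke the general reduction theorem for Morse matchings on the directed graph of a complex of free modules (Sk\"oldberg, \cite{S06}, Theorem~1 and its consequences; equivalently the algebraic Discrete Morse Theory of \cite{JW09}): given any Morse matching $A$ on $\Gamma_{\mathbb P}$, there is a new complex, the Morse reduction of $(\mathbb P,\partial)$ by $A$, whose free modules are indexed by the critical vertices $A^0$, which is homotopy equivalent (in the category of complexes of $R$-modules) to $(\mathbb P,\partial)$. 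In particular it is again a free resolution of $I$, because homotopy equivalent complexes have the same homology and the augmentation to $I$ is preserved. Since Proposition~\ref{prop:matching} tells us $V$ is a Morse matching, $\Gamma^V_{\mathbb P}$ is a free resolution of $I$.

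Next I would argue minimality. A free resolution of a monomial ideal is minimal precisely when every entry of every differential lies in the maximal graded ideal $\langle x_1,\dots,x_n\rangle$, equivalently when no differential has a unit (nonzero scalar) entry, equivalently when no edge of the associated directed graph is an isomorphism in the sense used in Section~\ref{sec:reduction}. So it suffices to check that in the reduced complex $\Gamma^V_{\mathbb P}$ no edge is an isomorphism. This is exactly the perfectness clause already proved at the end of Proposition~\ref{prop:matching}: every potential isomorphism $[h,u]\mapsto[h/x_i,ux_i]$ in $\mathbb P$ is matched (reversed) at stage $V_i$, or else one of its two endpoints is already incident to a reversed edge at some stage $V_j$ with $j>i$; in either case that edge does not survive into $\Gamma^V_{\mathbb P}$. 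Hence the Morse reduction by $V$ has no surviving isomorphisms and is therefore a \emph{minimal} free resolution of $I$. One should also note, to close the loop with the rest of the paper, that the length of $\Gamma^V_{\mathbb P}$ equals $\pd(I)$: the Pommaret-Seiler resolution already has minimal length (\cite{S09b}), and a Morse reduction never increases length, while a minimal resolution of $I$ has length exactly $\pd(I)$, so the two agree.

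The routine steps here are the verification that Morse reduction preserves the resolution property and the augmentation, which are standard and borrowed wholesale from \cite{S06,JW09}; the genuinely nontrivial input — the absence of directed cycles in $\Gamma^V_{\mathbb P}$ and the perfectness of $V$ — has already been carried out in Proposition~\ref{prop:matching}. Thus the main obstacle is not in this theorem at all but upstream: the cycle-freeness argument in the proof of Proposition~\ref{prop:matching}, where one must rule out the ascending chains of inserted variables. Given that, the present theorem is a short deduction: $V$ is a perfect Morse matching (Proposition~\ref{prop:matching}) $\Rightarrow$ the Morse reduction $\Gamma^V_{\mathbb P}$ is a free resolution of $I$ with no unit entries in its differentials $\Rightarrow$ $\Gamma^V_{\mathbb P}$ is the minimal free resolution of $I$.
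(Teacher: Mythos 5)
Your proposal is correct and follows essentially the same route as the paper: invoke the algebraic Discrete Morse Theory reduction from \cite{S06,JW09} to see that $\Gamma^V_{\mathbb P}$ is a free resolution homotopy equivalent to $(\mathbb P,\partial)$, then use the perfectness of $V$ established in Proposition~\ref{prop:matching} to conclude that no unit entries survive, hence the reduction is minimal. The only additions (the explicit characterization of minimality via unit entries and the remark on the length equaling $\pd(I)$) are harmless elaborations of the same argument.
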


\begin{proof}
Any Morse matching produces a reduction $\Gamma'_{\mathbb{P}}$ on $\Gamma_{\mathbb{P}}$, which is homotopically equivalent to it, i.e. both are resolutions of $I$. For each edge $[h,u]\rightarrow [h',u']$ in $V$, we say that $[h,u]$ is a source cell and $[h',u']$ is a target cell. All other generators (i.e. those that are not incident to any edge in $V$) are called {\em critical} cells. $\Gamma^V_{\mathbb{P}}$ is obtained from $\Gamma_{\mathbb{P}}$ by deleting the generators involved in $V$ and all the edges of $\Gamma_{\mathbb{P}}$ adjacent to them, i.e. the only remaining generators are those corresponding to critical cells with respect to $V$. Since $V$ is perfect, there are no isomorphisms in $\Gamma^V_{\mathbb{P}}$ and hence it is a minimal free resolution of $I$, whose generators correspond to the critical cells with respect to $V$. The differential in  $\Gamma^V_{\mathbb{P}}$ is obtained from that of  $\Gamma_{\mathbb{P}}$ by a standard algorithmic method whose (technical but straightforward) details are given in \cite{S06}. 
\end{proof}

\begin{remark}
Observe that in \cite{AFSS15} the authors use Discrete Morse Theory in the context of Pommaret-Seiler resolutions for general polynomial ideals. The problem is, however, different from the one treated here. In \cite{AFSS15} the authors start with a complex  given by Sk\"oldberg in \cite{S11} that is in fact a free resolution for modules with initially linear syzygies. This construction is then reduced by means of Discrete Morse Theory and the Pommaret-Seiler resolution is obtained. Hence, while in \cite{AFSS15} the reduction is from Sk\"oldberg's complex to the Pommaret-Seiler resolution, in our case we go, in the monomial case, from Pommaret-Seiler resolution to a minimal free resolution. This last reduction to a minimal free resolution is performed (but not described) in \cite{AFSS15} using linear algebra in the way described in \cite[Chapter 6]{CLO05}.
\end{remark}

\begin{remark}
The information for constructing $\Gamma_{\mathbb{P}}$ and $V$ can be read off from an annotated version of the $P$-graph of $I$. We can build this {\em annotated} $P$-graph as we compute the Pommaret basis of $I$. Let us denote by $G_I$ the P-graph of $I$, and by $(\mathbb{P},\partial)$ its Pommaret-Seiler resolution. As we build the Pommaret basis $\Hk$ we can store the information of $G_I$ assigning two pieces of information to each edge $e_{i,j}$, namely $k(e_{i,j})$ will denote the variable $k\in\overline{\Xk}_P(h_i)$ used to reach $h_j$ from $h_i$ and $t(e_{i,j})$ denotes the term $t_{i;k}$ for that $k$. 
Now, for each directed path $p=(p_1,\dots,p_l)=(e_{i_1,j_1},\dots,e_{i_l,j_l})$ in $G_I$ such that $k(e_{i_a,j_a})<k(e_{i_b,j_b})$ for $a<b$, we say that the multidegree of the path is $\md(p)=\prod_{k=1}^l t(e_{i_k,j_k})$. We say that a path between nodes $i$ and $j$ is a {\em valid path} if $\md(p)=1$. Valid paths indicate the isomorphisms that we consider for the Morse matching $V$ described above.

The annotated $P$ graph $G_I$ allows us to construct alternative Morse matchings in $\Gamma_{\mathbb{P}}$. For any multidegree $\mu$ consider then the following set of vertices in $\Gamma_{\mathbb P}$ (equivalently in $\Cf_I$): $V_{\mu}=\{\alpha:\{u\}\mid \md(\alpha:\{u\})=\mu\}$ where $\alpha:\{u\}$ indicates the vertex in $\Gamma_{\mathbb{P}}$ corresponding to the generator indexed by $[h_\alpha,u]$ in the Pommaret-Seiler resolution $\mathbb{P}$. Consider now the following partial matching in $V_\mu$: $E_\mu=\{\alpha:\{u\}\rightarrow \beta:\{u/u_j\}\mid j=\max(u)\}$. Then we have that 
$\bigcup_{\mu\in\mathbb{N}^n} E_\mu$ is a Morse matching in $\Gamma_{\mathbb{P}}$ (not perfect in general). Proceeding iteratively by multidegree, we obtain then a reduction of $\mathbb{P}$. If this reduction is already the minimal free resolution of $I$ then we stop the algorithm. Otherwise, we can proceed by further use of Morse matchings using those pairs of generators $[h_\alpha,u]$, $[h_\beta,u']$ in the reduced resolution such that the coefficient of $[h_\beta,u']$ in the differential of $[h_\alpha,u]$ is a nonzero scalar. These Morse mathings are always possible and they strictily reduce the number of such nonzero scalars in the differential of the resolution, hence the algorithm terminates after a finite number of steps and provides the minimal free resolution of $I$. The cellular structure $\Cf_I$ of $\mathbb{P}$ allows us to read this reduction in terms of the geometrical differential of $\Cf_I$, and it can be used to obtain other geometrical Morse matchings that can eventually reduce the resolution completely.
\end{remark}

\begin{example}
To illustrate the above notions, let $I=\langle x^2,y^4,y^2z^2,z^3\rangle$ a quasi-stable ideal whose Pommaret basis is 
\[
\Hk=\{x^2, x^2y,x^2y^2,x^2y^3,x^2z,x^2yz,x^2y^2z, x^2y^3z,x^2z^2, x^2yz^2,y^4,y^4z,y^2z^2,z^3\}.
\]

The generators of the Pommaret-Seiler resolution, i.e. the vertices of the graph $\Gamma_\mathbb{P}$, are indexed by the following symbols:

Level $0$: $[x^2], [x^2y],[x^2y^2],[x^2y^3],[x^2z],[x^2yz],[x^2y^2z],$ $ [x^2y^3z],[x^2z^2], [x^2yz^2]$, $[y^4],[y^4z],[y^2z^2],[z^3]$. These correspond to the vertices in the $P$-graph of $I$, hence the vertices of $\Cf_I$.

Level $1$: $[x^2,y], [x^2y,y],[x^2y^2,y],[x^2y^3,y],[x^2z,y],[x^2yz,y],[x^2y^2z,y]$, $ [x^2y^3z,y]$, $[x^2z^2,y], [x^2yz^2,y]$, $[x^2,z], [x^2y,z],[x^2y^2,z],[x^2y^3,z],[x^2z,z],[x^2yz,z],[x^2y^2z,z],$ $ [x^2y^3z,z],[x^2z^2,z], [x^2yz^2,z][y^4,z],[y^4z,z],[y^2z^2,z]$, which correspond to edges in the $P$-graph of $I$ and to $1$-cells in $\Cf_I$.

Level $2$: $[x^2,yz], [x^2y,yz],[x^2y^2,yz],[x^2y^3,yz],[x^2z,yz],[x^2yz,yz],[x^2y^2z,yz],$ $ [x^2y^3z,yz],[x^2z^2,yz], [x^2yz^2,yz]$, which correspond to the $2$-cells in $\Cf_I$.

Now following the procedure described in Proposition \ref{prop:matching}, we have that $V_3$ has thirteen elements and $V_2$ has five:
 \begin{align*}
 V_3=&\{[x^2,yz]\rightarrow[x^2z,y],[x^2y,yz]\rightarrow[x^2yz,y],[x^2y^2,yz]\rightarrow[x^2y^2z,y],\\
  &[x^2y^3,yz]\rightarrow[x^2y^3z,y],[x^2z,yz]\rightarrow[x^2z^2,y],[x^2yz,yz]\rightarrow[x^2yz^2,y],\\
  & [x^2,z]\rightarrow[x^2z],[x^2y,z]\rightarrow[x^2yz],[x^2y^2,z]\rightarrow[x^2y^2z],\\
  &[x^2y^3,z]\rightarrow[x^2y^3z],[x^2z,z]\rightarrow[x^2z^2],[x^2yz,z]\rightarrow[x^2yz^2],\\
  &[y^4,z]\rightarrow[y^4z]\}\\
  V_2=&\{[x^2y^2z,yz]\rightarrow[x^2y^3z,z],[x^2z^2,yz]\rightarrow[x^2yz^2,z],[x^2,y]\rightarrow[x^2y],\\
  &[x^2y,y]\rightarrow[x^2y^2],[x^2y^2,y]\rightarrow[x^2y^3]\}
 \end{align*}
 
The critical cells in $\Gamma^V_{\mathbb{P}}$, which support the minimal free resolution of $I$, are then
 \begin{align*}
& \{ [x^2],[y^4],[y^2z^2],[z^3],[x^2y^3,y],[x^2z^2,z],[x^2y^2z,z],[y^4z,z],\\
 &[y^2z^2,z],[x^2y^3z,yz],[x^2yz^2,yz] \}.
 \end{align*}

Figure \ref{fig:P-graph} shows the $P$-graph of $I$ where each edge $e_{i,j}$ is labelled by $t(e_{i,j})$, and Figure \ref{fig:P-graph_critical} shows the critical cells of the Morse reduction induced by the Morse-matching based on $V$ described in Proposition \ref{prop:matching}. We can see that the reduced complex, depicted in Figure \ref{fig:P-graph_skeleton} is a cell complex that supports the minimal free resolution of $I$. Hence, the geometrical differential of this complex gives us the differentials in the minimal free resolution of $I$, which is given by

\[
0\longrightarrow R^2\stackrel{\partial_2}{\longrightarrow}R^5 \stackrel{\partial_1}{\longrightarrow}R^4\stackrel{\partial_0}{\longrightarrow}I\longrightarrow 0,
\]
where

\[
\partial_0=
\begin{pmatrix}
    x^2  &  y^4 & y^2z^2 & z^3   
\end{pmatrix},
 \partial_1=
\begin{pmatrix}
    -y^4  &  0 & -y^2z^2 & -z^3 & 0      \\
    -x^2  &  -z^2 &  0 & 0 & 0      \\
    0  &  y^2 & x^2 & 0 & -z      \\
    0  &  0 & 0 & x^2 & y^2      \\
\end{pmatrix},
\]
\[
 \partial_2=
\begin{pmatrix}
    z^2  &  0  \\
    x^2  &  0  \\
    -y^2  &  z  \\
    0  &  -y^2 \\
    0 &  x^2  \\
\end{pmatrix} .
\]

\tikzstyle{vertex}=[circle,draw=black,fill=black!25,minimum size=20pt,inner sep=0pt,font=\small]
\tikzstyle{selected vertex} = [vertex, fill=red!24]
\tikzstyle{edge} = [draw,thick,-]
\tikzstyle{weight} = [font=\small]
\tikzstyle{selected edge} = [draw,line width=5pt,-,red!50]
\tikzstyle{ignored edge} = [draw,line width=5pt,-,black!20]

\begin{figure}
\centering
\begin{minipage}[b][11cm][s]{.45\textwidth}
\centering
\vfill

\begin{tikzpicture}[scale=1, auto,swap]

\foreach \pos/\name in {{(0,0)/x^2}, {(1,1)/x^2y}, {(2,2)/x^2y^2},
                            {(3,3)/x^2y^3}, {(4,4)/y^4}, {(2,0)/x^2yz}, {(3,1)/x^2y^2z}, {(4,2)/x^2y^3z}, {(5,3)/y^4z},{(1,-1)/x^2z},
                            {(2,-2)/x^2z^2},{(3,-1)/x^2yz^2},{(3,-3)/z^3},{(5,0)/y^2z^2}}
        \node[vertex] (\name) at \pos {$\name$};
        
\foreach \pos/\name in {{(0,0)/x^2}, {(4,4)/y^4}, {(3,-3)/z^3},{(5,0)/y^2z^2}}
        \node[vertex] (\name) at \pos {$\name$};

 \foreach \source/ \dest /\weight in {x^2/x^2y/1,x^2y/x^2y^2/1,x^2y^2/x^2y^3/1,x^2y^3/y^4/x^2,
 						     x^2/x^2z/1,x^2z/x^2z^2/1,x^2z^2/z^3/x^2,
						     x^2z/x^2yz/1,x^2yz/x^2y^2z/1,x^2y^2z/x^2y^3z/1,x^2y^3z/y^4z/x^2,
						     x^2y/x^2yz/1,x^2y^2/x^2y^2z/1,x^2y^3/x^2y^3z/1,
						     x^2yz/x^2yz^2/1,
						     x^2y^2z/y^2z^2/x^2,x^2y^3z/y^2z^2/x^2y,y^4z/y^2z^2/y^2,
						     x^2z^2/x^2yz^2/1,x^2yz^2/z^3/x^2y,y^4/y^4z/1,
						     x^2yz^2/y^2z^2/x^2,y^2z^2/z^3/y^2
                                         }
 \path[edge] (\source) -- node[weight] {$\weight$} (\dest);

\foreach \source/ \dest /\weight in {x^2y^3/y^4/,y^4z/y^2z^2/,x^2yz^2/y^2z^2/,y^2z^2/z^3/}                                         
 \path[edge] (\source) -- node[weight] {$\weight$} (\dest);

\end{tikzpicture}
\vfill
\caption{$P$-graph of the ideal $I=\langle x^2,y^4,y^2z^2,z^3\rangle$.}\label{fig:P-graph}
\vspace{\baselineskip}
\end{minipage}\qquad
\begin{minipage}[b][11cm][s]{.45\textwidth}
\centering
\vfill
\begin{tikzpicture}[scale=1, auto,swap]

\draw[fill=red!25] (4,2)--(5,3)--(5,0);
\draw[fill=red!25] (5,0)--(3,-1)--(3,-3);

\foreach \pos/\name in {{(0,0)/x^2}, {(1,1)/x^2y}, {(2,2)/x^2y^2},
                            {(3,3)/x^2y^3}, {(4,4)/y^4}, {(2,0)/x^2yz}, {(3,1)/x^2y^2z}, {(4,2)/x^2y^3z}, {(5,3)/y^4z},{(1,-1)/x^2z},
                            {(2,-2)/x^2z^2},{(3,-1)/x^2yz^2},{(3,-3)/z^3},{(5,0)/y^2z^2}}
        \node[vertex] (\name) at \pos {$\name$};
        
\foreach \pos/\name in {{(0,0)/x^2}, {(4,4)/y^4}, {(3,-3)/z^3},{(5,0)/y^2z^2}}
        \node[selected vertex] (\name) at \pos {$\name$};

 \foreach \source/ \dest /\weight in {x^2/x^2y/,x^2y/x^2y^2/,x^2y^2/x^2y^3/,x^2y^3/y^4/,
 						     x^2/x^2z/,x^2z/x^2z^2/,x^2z^2/z^3/,
						     x^2z/x^2yz/,x^2yz/x^2y^2z/,x^2y^2z/x^2y^3z/,x^2y^3z/y^4z/,
						     x^2y/x^2yz/,x^2y^2/x^2y^2z/,x^2y^3/x^2y^3z/,
						     x^2yz/x^2yz^2/,
						     x^2y^2z/y^2z^2/,x^2y^3z/y^2z^2/,y^4z/y^2z^2/,
						     x^2z^2/x^2yz^2/,x^2yz^2/z^3/,y^4/y^4z/,
						     x^2yz^2/y^2z^2/,y^2z^2/z^3/
                                         }
 \path[edge] (\source) -- node[weight] {$\weight$} (\dest);

\foreach \source/ \dest /\weight in {x^2y^3/y^4/,y^4z/y^2z^2/,x^2y^2z/y^2z^2/,y^2z^2/z^3/,x^2z^2/z^3/}                                         
 \path[selected edge] (\source) -- node[weight] {$\weight$} (\dest);

\end{tikzpicture}
\vfill
\caption{$P$-graph of the ideal $I=\langle x^2,y^4,y^2z^2,z^3\rangle$ with critical faces of Morse reduction.}\label{fig:P-graph_critical}
\end{minipage}
\end{figure}

\begin{figure}[h]
\pgfdeclarelayer{background}
\pgfsetlayers{background,main}
\tikzstyle{vertex}=[circle,draw=black,line width=1.5pt,fill=black!25,minimum size=20pt,inner sep=0pt,font=\small]
\tikzstyle{selected vertex} = [vertex, fill=red!24]
\tikzstyle{edge} = [draw,line width=2pt,-]
\tikzstyle{weight} = [font=\small]
\tikzstyle{selected edge} = [draw,line width=5pt,-,red!50]
\tikzstyle{ignored edge} = [draw,line width=5pt,-,black!20]

\begin{minipage}[b][11cm][s]{.6\textwidth}
\vfill
\begin{tikzpicture}[scale=1, auto,swap]

\draw[fill=gray!25] (0,0)--(4,4)--(5,0);
\draw[fill=gray!25] (0,0)--(3,-3)--(5,0);

\foreach \pos/\name in {{(0,0)/x^2}, {(4,4)/y^4},{(3,-3)/z^3},{(5,0)/y^2z^2}}
        \node[vertex] (\name) at \pos {$\name$};

 \foreach \source/ \dest /\weight in {x^2/y^4/,x^2/z^3/,y^4/y^2z^2/,y^2z^2/z^3/,x^2/y^2z^2/
                                         }
 \path[edge] (\source) -- node[weight] {$\weight$} (\dest);

\end{tikzpicture}
\vfill
\caption{Reduced cell complex for the ideal $I=\langle x^2,y^4,y^2z^2,z^3\rangle$ .}\label{fig:P-graph_skeleton}
\end{minipage}
\end{figure}
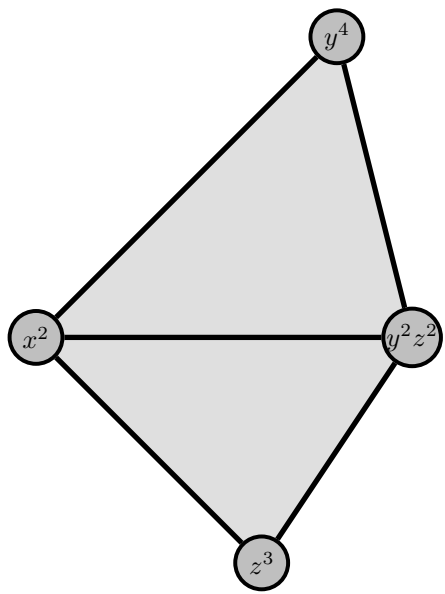
\end{example}

\bibliographystyle{plain}
 \bibliography{bibliography}

\begin{thebibliography}{10}

\bibitem{AFSS15}
M.~Albert, M.~Fetzer, E.~{Sáenz-de-Cabezón}, and W.~M. Seiler.
\newblock On the free resolution induced by a {P}ommaret basis.
\newblock {\em Journal of Symbolic Computation}, 68:4--26, 2015.

\bibitem{AFG20}
J.~\`Alvarez-Montaner, O.~Fern\'andez-Ramos, and P.~Gimenez.
\newblock Pruned cellular free resolutions of monomial ideals.
\newblock {\em Journal of Algebra}, 541:126--145, 2020.

\bibitem{AHH98}
A.~Aramova, J.~Herzog, and T.~Hibi.
\newblock Squarefree lexsegment ideals.
\newblock {\em Math. Z.}, 228(2):353--378, 1998.

\bibitem{BM20}
M.~Barile and A.~Macchia.
\newblock Minimal cellular resolutions of the edge ideals of forests.
\newblock {\em Electronic Journal of Combinatorics}, 27:P2.41, 2020.

\bibitem{BW02}
E.~Batzies and V.~Welker.
\newblock Discrete {M}orse theory for cellular resolutions.
\newblock {\em J. Reine Angew. Math.}, 543:147--168, 2002.

\bibitem{BPS98}
D.~Bayer, I.~Peeva, and B.~Sturmfels.
\newblock Monomial resolutions.
\newblock {\em Math. Res. Lett.}, 5:31--46, 1998.

\bibitem{BS98}
D.~Bayer and B.~Sturmfels.
\newblock Cellular resolutions of monomial modules.
\newblock {\em J. Reine Angew. Math.}, 502:123--140, 1998.

\bibitem{BG06}
I.~Bermejo and P.~Gimenez.
\newblock Saturation and {C}astelnuovo-{M}umford regularity.
\newblock {\em Journal of Algebra}, 303:592--617, 2006.

\bibitem{BC22}
C.~Bertone and F.~Cioffi.
\newblock The close relation between border and {P}ommaret marked bases.
\newblock {\em Collectanea Mathematica}, 73:181--201, 2022.

\bibitem{CS05}
G.~Caviglia and E.~Sbarra.
\newblock Characteristic-free bounds for the {C}astelnuovo-{M}umford regularity.
\newblock {\em Compos. Math}, 141:1365--1373, 2005.

\bibitem{C21}
M.~Ceria.
\newblock Combinatorial decompositions for monomial ideals.
\newblock {\em Journal of Symbolic Computation}, 104:630--652, 2021.

\bibitem{CM20}
M.~Ceria and T.~Mora.
\newblock Toward involutive bases over effective rings.
\newblock {\em Appl. Algebra Engrg. Comm. Comput.}, 31(5-6):359--387, 2020.

\bibitem{CE95}
H.~Charalambous and G.~Evans.
\newblock Resolutions obtained by iterated mapping cones.
\newblock {\em Journal of Algebra}, 176:750--754, 1995.

\bibitem{C09}
M.~Cimpoea\c{s}.
\newblock Some remarks on {B}orel type ideals.
\newblock {\em Communications in Algebra}, 37(2):724--727, 2009.

\bibitem{CEFMMSS22}
S.M. Cooper, S.~El-Khoury, S.~Faridi, S.~Mayes-Tang, S.~Morey, L.M. Sega, and S.~Spiroff.
\newblock Morse resolutions of powers of square-free monomial ideals of projective dimension one.
\newblock {\em Journal of Algebraic Combinatorics}, 55:1085--1122, 2022.

\bibitem{CLO05}
D.~Cox, J.~Little, and D.~O'Shea.
\newblock {\em Using Algebraic Geometry}.
\newblock Springer Verlag, 2005.

\bibitem{DM14}
A.~Dochtermann and F.~Mohammadi.
\newblock Cellular resolutions from mapping cones.
\newblock {\em Journal of Combinatorial Theory, Series A}, 128:180--206, 2014.

\bibitem{EK90}
S.~Eliahou and M.~Kervaire.
\newblock Minimal free resolutions of some monomial ideals.
\newblock {\em Journal of Algebra}, 129:1--25, 1990.

\bibitem{E06}
G.~A. Evans.
\newblock Noncommutative involutive bases.
\newblock {\em arXiv preprint math/0602140}, 2006.

\bibitem{F98}
R.~Forman.
\newblock Morse theory for cell complexes.
\newblock {\em Advances in Mathematics}, 134:90--145, 1998.

\bibitem{F01}
R.~Forman.
\newblock A user's guide to {D}iscrete {M}orse {T}heory.
\newblock {\em Sém. Lothar. Combin.}, 48, 12 2001.

\bibitem{GHP02}
V.~Gasharov, T.~Hibi, and I.~Peeva.
\newblock Resolutions of a-stable ideals.
\newblock {\em Journal of Algebra}, 254(2):375--394, 2002.

\bibitem{G99}
V.~Gerdt.
\newblock Completion of linear differential systems to involution.
\newblock In V.~Ghanza, E.~Mayr, and E.~Vorozhtsov, editors, {\em Computer Algebra in Scientific Computing, CASC'99}, pages 115--137. Springer Verlag, 1999.

\bibitem{GB98a}
V.~Gerdt and Y.~Blinkov.
\newblock Involutive bases of polynomial ideals.
\newblock {\em Mathematics and Computers in Simulation}, 45:519--542, 1998.

\bibitem{GB98b}
V.~Gerdt and Y.~Blinkov.
\newblock Minimal involutive bases.
\newblock {\em Mathematics and Computers in Simulation}, 45:543--560, 1998.

\bibitem{G15}
A.~Goodarzi.
\newblock Cellular structure for the {H}erzog–{T}akayama resolution.
\newblock {\em J. Algebr. Comb.}, 41:21--28, 2015.

\bibitem{HSS12}
A.~Hashemi, M.~Schweinfurter, and W.~M. Seiler.
\newblock Quasi-stability versus genericity.
\newblock In V.~Gerdt, W.~Koepf, E.~Mayr, and E.~Vorozhtsov, editors, {\em Computer Algebra in Scientific Computing -- CASC 2012}, volume 7442 of {\em Lecture Notes in Computer Science}, pages 172--184. Springer-Verlag, 2012.

\bibitem{HPV03}
J.~Herzog, D.~Popescu, and M.~Vladoiu.
\newblock On the {E}xt-modules of ideals of {B}orel type.
\newblock In L.~L. Avramov, M.~Chardin, M.~Morales, and C.~Polini, editors, {\em Commutative Algebra: Interactions with Algebraic Geometry}, pages 171--186. Amer. Math. Soc., 2003.

\bibitem{HT02}
J.~Herzog and Y.~Takayama.
\newblock Resolutions by mapping cones.
\newblock {\em Homology, Homotopy and Applications}, 4:277--294, 2002.

\bibitem{J24}
M.~Janet.
\newblock Les modules de formes alg\'ebriques et la th\'eorie g\'en\'erale des syst\`emes differ\'entiels.
\newblock {\em Ann. \'Ecole Norm Sup}, 41:27--65, 1924.

\bibitem{JW09}
M.~J\"ollenbeck and V.~Welker.
\newblock {\em Minimal resolutions via algebraic discrete {M}orse theory}, volume 197 of {\em Mem. Amer. Math. Soc.}
\newblock AMS, 2009.

\bibitem{L88}
G.~Lyubeznik.
\newblock A new explicit finite free resolution of ideals generated by monomials in an {R}-sequence.
\newblock {\em J. Pure and Appl. Algebra}, 51:193--195, 1988.

\bibitem{M10}
J.~Mermin.
\newblock The {E}liahou-{K}ervaire resolution is cellular.
\newblock {\em Journal of Commutative Algebra}, 2:55--78, 2010.

\bibitem{MS04}
E.~Miller and B.~Sturmfels.
\newblock {\em Combinatorial Commutative Algebra}, volume 227 of {\em Graduate Texts in Mathematics}.
\newblock Springer Verlag, 2004.

\bibitem{PS08}
I.~Peeva and M.~Stillman.
\newblock The minimal free resolution of a {B}orel ideal.
\newblock {\em Expo. Math.}, 26:237--247, 2008.

\bibitem{PR05}
W.~Plesken and D.~Robertz.
\newblock Janet's approach to presentations and resolutions for polynomials and linear {PDE}s.
\newblock {\em Arch. Math.}, 84:22--37, 2005.

\bibitem{P78}
J.~Pommaret.
\newblock {\em Systems of Partial Differential Equations and Lie Pseudogroups}.
\newblock Gordon \& Breach, 1978.

\bibitem{R10}
C.~Riquier.
\newblock {\em Les Syst\'emes d'Equations aux D\`eriv\`ees Partielles}.
\newblock Gauthiers-Villars, 1910.

\bibitem{S09a}
W.~M. Seiler.
\newblock A combinatorial approach to involution and $\delta$-regularity {I}: Involutive bases in polynomial algebras of solvable type.
\newblock {\em Applicable Algebra in Engineering, Communications and Computing}, 20:207--259, 2009.

\bibitem{S09b}
W.~M. Seiler.
\newblock A combinatorial approach to involution and $\delta$-regularity {II}: Structure analysis of polynomial modules with {P}ommaret bases.
\newblock {\em Applicable Algebra in Engineering, Communications and Computing}, 20:261--338, 2009.

\bibitem{S10}
W.~M. Seiler.
\newblock {\em Involution}.
\newblock Springer Verlag, 1st edition, 2010.

\bibitem{S06}
E.~Sk\"oldberg.
\newblock Morse theory from an algebraic viewpoint.
\newblock {\em Trans. AMS}, 385:115--129, 2006.

\bibitem{S11}
E.~Sköldberg.
\newblock Resolutions of modules with initially linear syzygies.
\newblock {\em arXiv preprint math/1106.1913}, 2011.

\bibitem{S69}
D.~Spencer.
\newblock Overdetermined systems of linear partial differential equations.
\newblock {\em Bulletin of the AMS}, 75:179--239, 1969.

\bibitem{T66}
D.~Taylor.
\newblock {\em Ideals generated by monomials in an $R$-sequence}.
\newblock PhD thesis, University of Chicago, 1966.

\bibitem{W91}
W.~T. Wu.
\newblock On the construction of {G}r\"obner basis of a polynomial ideal based on {R}iquier-{J}anet theory.
\newblock {\em Syst. Sci. Math. Sci.}, 4:194--207, 1991.

\end{thebibliography}
\end{document}